\newtheorem{thm}{Theorem}[section]
\newtheorem*{thm*}{Theorem}
\newtheorem*{thmstar*}{Theorem*}
\newtheorem{cor}[thm]{Corollary}
\newtheorem{lem}[thm]{Lemma}
\newtheorem{prop}[thm]{Proposition}
\newtheorem*{Ques*}{Question}
\theoremstyle{definition}
\newtheorem{defn}[thm]{Definition}
\newtheorem*{claim}{Claim}
\newtheorem*{egs}{Examples}
\newtheorem*{eg}{Example}
\theoremstyle{remark}
\newtheorem{rem}[thm]{Remark}
\numberwithin{equation}{section}
\newcommand{\eps}{\varepsilon}
\newcommand{\fkg}{\mathfrak g}
\newcommand{\mcA}{\mathcal A}
\newcommand{\dbO}{{\mathbb O}}
\newcommand{\dbR}{{\mathbb R}}
\newcommand{\dbZ}{{\mathbb Z}}
\newcommand{\biga}{{\mathcal A}}
\newcommand{\Hom}{\mathrm{Hom}}
\newcommand{\la}{\langle}
\newcommand{\ra}{\rangle}
\DeclareMathOperator{\SL}{SL}
\DeclareMathOperator{\St}{St}
\DeclareMathOperator{\ad}{ad}
\DeclareMathOperator{\End}{End}
\DeclareMathOperator{\sgn}{sgn}
\begin{document}

\title[]{Nonassociative algebras and groups with  property ($T$)}%
\author{Zezhou Zhang}%
\address{University of California, San Diego}%
\email{z9zhang@ucsd.edu}%

\thanks{}%
\subjclass{}%
\keywords{}%

\begin{abstract}
We extend the results of \cite{EJK} on property (T) to certain groups "coordinatized" by nonassociative algebras.
\end{abstract}
\maketitle


\setcounter{section}{-1}
\section{Introduction}

Throughout this paper, unless otherwise stated,  \textit{rings} are always  unital,associative or alternative. \textit{Groups} are discrete, and group conjugation is defined to be $g^h=h^{-1}gh$.

It has been observed that a large class of groups can be characterized by root systems $\Phi$, namely the ones with ``$\Phi$-commutator relations'':

\begin{defn}[\cite{EJK}]
Let $G$ be a group and $\Phi$ a classical root system.Suppose that G has a family of
subgroups $\{X_\alpha\}_{\alpha \in \Phi}$ such that
\[
[X_\alpha , X_\beta] \ \ \subseteq
\prod_{\gamma \in \Phi \cap (\dbZ_{>0} \oplus \dbZ_{>0}\beta) } X_\gamma
\]
for any $\alpha,\beta \in \Phi$ such that
$\alpha \neq -\lambda \beta$ with $\lambda \in \dbR_{>0}$. Then we say that $G$ is graded by $\Phi$ and $\{X_\alpha \}$ is a $\Phi$-grading of $G$.
\end{defn}
 Chevalley groups of adjoint type, the corresponding Steinberg groups (i.e. ``Graded covers" of such Chevalley groups in the sense of \cite{EJK}), and twisted Chevalley groups all fall into this class. For instance:
\begin{eg}
Let $R$ be an (associative) ring. Its Steinberg group at level n, denoted $\St_n(R)$, is the group generated by the symbols
$\{e_{ij}(a)| \ 1 \leq i \neq j \leq n,  a \in R \}$, with relations
\begin{itemize}
\item[(1)]$e_{ij}(a)e_{ij}(b)=e_{ij}(a+b)$
\item[(2)]$[e_{ij}(a),e_{jk}(b)]=e_{ik}(a+b)$ for $i,j,k$ distinct.
\item[(3)]$[e_{ij}(a),e_{jk}(b)]=id$ for $j\neq k, i \neq l$.
\end{itemize}

This group admits an $A_{n-1}$ grading, and 
surjects onto the elementary group
$\mathbb E_{{\it A}_{n-1}}(R)$, a group that is  equal to $\SL_n(R)$ when, for instance, $R$ is an Euclidean domain.
\end{eg}

As a natural generalization of the $SL_n$ case\cite{Ka,Sh1,EJ}, one may wonder when such groups possess property $(T)$. In
the recent work of Ershov, Jakin-Zapirain and Kassabov \cite{EJK}, it was shown that if $G$ is a group graded by $\Phi$, an irreducible classical root system of rank $ \geq 2$, then under a rather
weak restriction on the grading $\Phi$, the union of root subgroups is a Kazhdan subset of $G$. As a result, for such $\Phi$ and a finitely generated commutative ring $R$ , the Steinberg group $St_{\Phi}(R)$ and
the elementary Chevalley group
$\mathbb E_{\Phi}(R)$ have property $(T)$.

The following main theorem of this paper will further these results to groups defined over non-associative rings:
\begin{thm}\label{mainresult}
Let $\mathcal{A}$ be an alternative ring, then
\begin{itemize}
\item[(1)]
 The Steinberg group $\St_3(\mathcal{A})$ is an $A_2$-graded group. It has property $(T)$ when $\biga$ is finitely generated,
\item[(2)]
 If $\biga$ is equipped with a nuclear involution $*$, then the Steinberg group $\St(V)$ (where V is the hermitian Jordan pair $(H_3(\biga,*),H_3(\biga,*))$) is $C_3$-graded. This group has property $(T)$  if $\biga$ is finitely generated and the set of symmetric elements finitely generated as a form ring.

\end{itemize}
\end{thm}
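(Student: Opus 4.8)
The plan is to reduce both assertions to the Kazhdan-subset theorem of \cite{EJK}: once a group is shown to be graded by an irreducible classical root system of rank $\geq 2$, the union of its root subgroups is a Kazhdan subset, and property $(T)$ follows provided the relevant finite-generation hypotheses hold. Both $A_2$ and $C_3$ are irreducible of rank $\geq 2$, so the whole argument splits into (i) exhibiting the grading and (ii) verifying that finite generation of $\mathcal{A}$ (respectively of its symmetric elements) supplies exactly the ring-theoretic input that the machine of \cite{EJK} consumes. I would emphasize that the finite generation actually used is that of $\mathcal{A}$ as a \emph{ring}, not of $(\mathcal{A},+)$ as an abelian group; the passage from the former to relative property $(T)$ for the rank-two configurations is precisely the content borrowed from \cite{EJK}.

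For part (1), I would take the six root subgroups $X_{e_i-e_j}$ ($1\le i\ne j\le 3$) parametrized by $a\mapsto e_{ij}(a)$ and check the $A_2$-commutator relations directly from the defining relations of $\St_3(\mathcal{A})$. The only nontrivial bracket is $[e_{ij}(a),e_{jk}(b)]=e_{ik}(ab)$ for $i,j,k$ distinct, which involves a product of just two elements of $\mathcal{A}$; by Artin's theorem an alternative ring is associative on any two generators, so this product is unambiguous and the relations are consistent. Since $A_2$ contains no root of the form $2\alpha+\beta$, no triple products arise and the grading condition of the definition holds verbatim. Invoking the theorem of \cite{EJK} then yields that $\bigcup_\alpha X_\alpha$ is Kazhdan, and finite generation of $\mathcal{A}$ as a ring gives property $(T)$.

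For part (2), I would first construct the $C_3$-grading on $\St(V)$ from the Peirce/root decomposition of the Jordan pair $V=(H_3(\mathcal{A},*),H_3(\mathcal{A},*))$: the long roots $\pm 2e_i$ are attached to the diagonal (symmetric) entries and the short roots $\pm(e_i\pm e_j)$ to the off-diagonal entries, mirroring the realization of $C_3$ as the root system of the Jordan pair of hermitian $3\times 3$ matrices. The hypothesis that $*$ is a nuclear involution is what guarantees that $H_3(\mathcal{A},*)$ is a genuine Jordan algebra and that the structure constants defining $\St(V)$ are well posed. I would then verify the $C_3$-commutator relations, paying particular attention to the long-root directions, where sums such as $(e_i-e_j)+(e_i+e_j)=2e_i$ force products of three matrix entries; nuclearity of the symmetric elements ensures these triple products land where associativity is available. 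Finally, finite generation of $\mathcal{A}$ together with finite generation of the symmetric elements as a form ring provides both the short-root (bilinear) and long-root (quadratic) generation data required by \cite{EJK}, and property $(T)$ follows as before.

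The main obstacle I anticipate is precisely the verification of the commutator relations in part (2). Unlike the $A_2$ case, where Artin's theorem trivializes the nonassociativity, the rank-three $C_3$ configuration genuinely couples diagonal and off-diagonal entries through triple products, and for alternative $\mathcal{A}$ (for example octonionic coordinates, where $H_3(\mathcal{A},*)$ is the exceptional Jordan algebra) these need not associate. The role of the nuclear-involution hypothesis and the form-ring structure is exactly to confine the troublesome products to the nucleus and to keep the long-root subgroups abelian with the correct parametrization; making this confinement precise, and checking that the resulting relations still define a $C_3$-graded group to which the criterion of \cite{EJK} applies, is the technical heart of the proof.
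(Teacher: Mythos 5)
Your reduction captures half of the paper's argument but misses its technical core. Invoking \cite[Theorem 5.1]{EJK} yields only condition (a): the union of root subgroups $\bigcup_\alpha X_\alpha$ is a Kazhdan subset. Since this subset is infinite, property $(T)$ does not follow until one also proves condition (b): relative property $(T)$ for the pair $\bigl(G,\bigcup_\alpha X_\alpha\bigr)$, i.e.\ for each root subgroup. You assert that this passage ``is precisely the content borrowed from \cite{EJK},'' but the relative-$(T)$ machinery there (and in Kassabov \cite{Ka} and Shalom \cite{Sh1}) is proved for \emph{associative} rings: it hinges on $R$ acting on $R^2$ through genuine matrix multiplication, which is unavailable when $\mathcal{A}$ is merely alternative. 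This is exactly where the paper does new work, in Theorem \ref{KasShalomZ}: the pair $((\mathcal{A}\ast\mathcal{A})\ltimes\mathcal{A}^2,\mathcal{A}^2)$ has relative property $(T)$ for $\mathcal{A}$ a finitely generated alternative ring. The proof replaces $\mathcal{A}$ by its left multiplication ring $L(\mathcal{A})$, which is associative, and then invokes Zhevlakov's theorem on $r_1$-words \cite{zhev} to conclude that $L(\mathcal{A})$ is finitely generated whenever $\mathcal{A}$ is finitely generated as a (nonassociative) ring; only then can Kassabov's theorem, in the sharpened form where the acting group is generated by a finite generating set adjoined with the identity, be applied. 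Relative $(T)$ for each $G_{ij}$ then follows by embedding it into $N_{ij}\cong\mathcal{A}^2$ inside $T_{ij}$, a homomorphic image of $(\mathcal{A}\ast\mathcal{A})\ltimes\mathcal{A}^2$ in $\St_3(\mathcal{A})$. Nothing in your proposal supplies this step, and Artin's theorem cannot: associativity of two-generated subalgebras says nothing about finite generation of the multiplication ring that controls the action on $\mathcal{A}^2$.

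The same gap recurs in part (2). There the paper establishes relative $(T)$ for the short root subgroups by placing them in homomorphic images of the rank-two configuration above, and handles the long roots by a bounded-generation argument (Lemma \ref{bddgen}) driven by the form-ring hypothesis together with the explicitly computed commutator relations (3) and (7); those relations in turn required coordinatizing the $w_i-w_j$ root subgroups, which the paper does by proving invertibility and additivity properties of the Bergmann operators $B(x,y)$ (e.g.\ $B(x,y)^{-1}=\mathrm{Id}-\mathrm{ad}_x\,\mathrm{ad}_y+Q_xQ_y$ and the identities of Lemma \ref{calcberg}). Your sketch of the $C_3$-grading, of the role of nuclearity, and your identification of the long roots as the place where the form ring enters are all consonant with the paper; but without the alternative-ring analogue of the Kassabov--Shalom relative-$(T)$ theorem, neither part of the theorem can be completed, so the proposal as it stands does not constitute a proof.
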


\section{Background}
\subsection{Property $(T)$}
\label{propt}
Being a well known notion in representation theory, equivalent definitions of property $(T)$ abound in literature. We refer the readers to \cite{BHV} for details. This paper will use a version extracted from \cite{EJK}, that suits our setting of \textcolor{red}{discrete} groups:

\begin{defn}\rm Let $G$ be a (discrete) group and $S$ a subset of $G$, 
Let $V$ be a unitary representation of $G$.
A nonzero vector $v\in V$ is called $(S,\eps)$-invariant if
$$\|sv-v\|\leq \epsilon\|v\| \mbox{ for any } s\in S.$$
If there exists a finite set $S \subset G$ and $\epsilon > 0$ such that for any unitary representation $V$ of $G$, the existence of a $(S,\eps)$-invariant vector implies the existence of a vector $v \in V$ fixed by $G$, we say that $G$ {\it has  property ($T$)} or alternatively,  {\it is Kazhdan}. Such $S$ is called a finite Kazhdan subset.
\end{defn}
%
%
%
%
%
%
%
In many circumstances, it is easier to find an infinite Kazhdan set in our group of interest $G$. In these situations, we shall need relative property $(T)$ to establish that the group is Kazhdan.
This notion was originally defined for pairs $(G,H)$
where $H$ is a normal subgroup of $G$ (see \cite{marg,Co}):

\vskip .1cm

\begin{defn}\rm Let $G$ be a group and $H$ a normal subgroup of $G$.
The pair $(G,H)$ has {\it relative property $(T)$} if there exists a finite set $S$
and $\eps>0$ such that if $V$ is any unitary representation of $G$
with an $(S,\eps)$-invariant vector, then $V$ has a (nonzero) $H$-invariant vector.
The supremum of the set of all such $\eps$'s with this property (for a fixed set $S$) is called
the {\it relative Kazhdan constant} of $(G,H)$ with respect to $S$, denoted
$\kappa(G,H;S)$.
\end{defn}

The notion above can be generalized to pairs $(G,B)$, (see \cite{Co}, also a remark in \cite[Section~2]{EJ} )
where $B$ is an arbitrary subset of a group $G$:

\begin{defn}\rm Let $G$ be a group and $B$ a subset of $G$.
The pair $(G,B)$ has {\it relative property $(T)$} if for any
$\epsilon>0$ there is a finite subset $S$ of $G$  and $\mu>0$
such that if $V$ is any unitary representation of $G$ and $v\in V$ is
$(S,\mu)$-invariant, then $v$ is $(B,\eps)$-invariant.
\end{defn}

In practice, the following "bounded generation principle" provides a nice way to generate from given subsets of $G$ with relative property ($T$) bigger ones with such property:

\begin{lem}\label{bddgen}
Let $G$ be a group, $B_1,...,B_k$ a finite collection of subsets of $G$. Suppose that ($G,B_i$) has relative $(T)$ for each i, then ($G,B_1 ...B_k$) also has relative (T). Here $B_1 ...B_k$ is the set of all elements of $G$ representable as $b_1 . . . b_k$ with $b_i \in B_i$ .
\end{lem}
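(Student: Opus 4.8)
The plan is to prove the bounded generation principle for relative property $(T)$ by induction on $k$, reducing to the case $k = 2$. The heart of the matter is to show that if $(G, B_1)$ and $(G, B_2)$ both have relative property $(T)$, then so does $(G, B_1 B_2)$; the general case follows by writing $B_1 \cdots B_k = (B_1 \cdots B_{k-1}) B_k$ and applying the inductive hypothesis together with the two-factor case. So I would focus on proving the statement for $B_1 B_2$.

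\smallskip

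For the two-factor case, fix $\eps > 0$; my goal is to produce a finite set $S \subset G$ and $\mu > 0$ so that any $(S, \mu)$-invariant vector $v$ in any unitary representation $V$ is automatically $(B_1 B_2, \eps)$-invariant. The key estimate is the triangle-inequality decomposition: for $b_1 \in B_1$, $b_2 \in B_2$, and a unit vector $v$, one writes
\[
\norm{b_1 b_2 v - v} \leq \norm{b_1 b_2 v - b_1 v} + \norm{b_1 v - v} = \norm{b_2 v - v} + \norm{b_1 v - v},
\]
where the last equality uses that $b_1$ acts by a unitary operator, hence is norm-preserving. Thus if I can arrange that $v$ is $(B_1, \eps/2)$-invariant and $(B_2, \eps/2)$-invariant, then $v$ is $(B_1 B_2, \eps)$-invariant.

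\smallskip

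To secure those two conditions simultaneously, I invoke the definition of relative property $(T)$ for each pair at tolerance $\eps/2$: there exist finite sets $S_1, S_2$ and constants $\mu_1, \mu_2 > 0$ such that $(S_i, \mu_i)$-invariance forces $(B_i, \eps/2)$-invariance. Then I set $S = S_1 \cup S_2$ and $\mu = \min(\mu_1, \mu_2)$. Any $(S, \mu)$-invariant vector is in particular $(S_i, \mu_i)$-invariant for each $i$ (since $S_i \subseteq S$ and $\mu \leq \mu_i$), hence $(B_i, \eps/2)$-invariant for each $i$, and the displayed inequality then yields $(B_1 B_2, \eps)$-invariance. Since $S$ is finite and $\mu > 0$, this is exactly what the definition of relative property $(T)$ for $(G, B_1 B_2)$ requires.

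\smallskip

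I do not anticipate a serious obstacle here — the argument is essentially a bookkeeping exercise with the triangle inequality and the unitarity of the representation. The one point requiring a little care is the normalization: the definition I am using states invariance in terms of $\norm{sv - v} \leq \eps \norm{v}$ rather than for unit vectors, so I should either work throughout with unit vectors (homogenizing at the end) or carry the factor $\norm{v}$ through each inequality, noting that unitarity gives $\norm{b_1 v} = \norm{v}$ so the relative tolerances compose additively without any change of scale. The induction itself is routine once the two-factor case is in hand, so the bulk of the write-up is simply choosing the constants as above and verifying the chain of implications.
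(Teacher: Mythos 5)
Your proof is correct: the triangle-inequality estimate $\norm{b_1b_2v-v}\le\norm{b_2v-v}+\norm{b_1v-v}$ (using unitarity), together with taking $S=S_1\cup S_2$ and $\mu=\min(\mu_1,\mu_2)$ at tolerance $\eps/2$, and induction on $k$, is exactly the standard argument. The paper itself states this lemma without proof (it is quoted as a known principle from the relative property $(T)$ literature), and your write-up matches the canonical proof found there, with the normalization issue correctly handled since unitarity preserves $\norm{v}$.
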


%

\subsection{Case $A_2$ and Alternative rings}\label{a2}

Faulkner\cite{Faul} showed that the notion of a `` Steinberg group of type $A_2$''
could be defined over any alternative ring. We now elaborate on this example for narrational completeness.

\begin{defn}\label{defalt}
A ring $\mcA$ is said to be {\it alternative} if $x^2y=x(xy)$ and $yx^2=(yx)x$ for all $x, y$ in $\mcA$. Such naming is due to the following fact: set $(x,y,z):=(xy)z-x(yz)$, then in all such $\biga$, $(x_{\sigma(1)}, x_{\sigma(2)}, x_{\sigma(3)}) = \sgn(\sigma)(x_1,x_2,x_3)$ is true for any $x_i \in \biga$ and any $\sigma \in S_3$.
\end{defn}

\begin{egs}
Any associative ring; the octonions $\dbO$.
\end{egs}

\begin{defn}
For $R$ associative, we can define the following:
\begin{itemize}
\item (``The elementary group"):
$E_n(R)$ is the  group generated by the elementary matrices $I+aE_{ij}$ where $a \in R, \ i \neq j$.
Denote its center 
by $Z_0$. It has a projective version $PE_n(R):= E_n(R)/Z_0$
\item(``The elementary Lie algebra"):
 $e_n(R)$  is the Lie ring generated by
$aE_{ij}$ , where $ a \in R,  \ i \neq j$. Denote its center by $\mathcal{Z}_0$.
It also has a projective version $pe_n(R):= e_n(R)/ \mathcal{Z}_0$.
\end{itemize}
Over a \textcolor{red}{non-associative ring} $R$, assuming $n \geq 3$, we can  mimic the above construction:
\begin{itemize}
\item(``The Steinberg group'')
$St_n(R)$ is the group generated by the symbols $x_{ij}(a),1 \leq i\neq j
\leq n \, a \in R$ subject to the relations:
\begin{equation*}
\begin{array}{ccccc}

(1) & x_{ij}(a)x_{ij}(b) &= & x_{ij}(a+b),\\

(2) &[x_{ij}(a),x_{ik}(b)] &= & x_{ik}(ab) & \text{for $i,j,k$ distinct},\\

(3) &[x_{ij}(a),x_{ik}(b)] &= & 1 & \text{for $j \neq k$ and $i \neq l$}. \\

\end{array}
\end{equation*}

\item(``The Steinberg Lie algebra'')
$st_n(R)$ is the Lie algebra generated by the symbols $l_{ij}(a),1 \leq i\neq j
\leq 3 , \ a \in R$ subject to the relations:
\begin{equation*}
\begin{array}{ccccc}

(4) &l_{ij}(a)+l_{ij}(b) &= & l_{ij}(a+b),\\

(5) &[l_{ij}(a),l_{ik}(b)] &= & l_{ik}(ab) & \text{for $i,j,k$ distinct},\\

(6) &[l_{ij}(a),l_{ik}(b)] &= & 0 & \text{for $j \neq k$ and $i \neq l$.} \\
\end{array}
\end{equation*}

\end{itemize}
\end{defn}

\begin{rem}
Of course one may see "collapsing" resulting from this construction, i.e. the relations might yield groups/Lie algebras with root
subgroups/subspaces smaller than $R$. However, this collapsing can be avoided: it was shown in \cite{Faul}
that if $n \geq 4$ and $R$ is associative, or if $n=3$ and $R$ is \textcolor{red}{alternative}, then there is no collapsing.
\end{rem}

The adjoint action of $E_n(R)$ on $e_n(R)$ is defined as:
\[
(I+aE_{ij})x(I-aE_{ij})=x+ [aE_{ij},x]- (aE_{ij})x(aE_{ij})
\]
Using the fact that both $Z_0$ and $\mathcal{Z}_0$ lie inside $Z(R)I$,it follows that $PE_n(R)$ acts faithfully on $pe_n(R)$. By expanding the second and third term of the right hand side, Faulkner \cite{Faul} gave a definition of $PE_3(R)$ for $R$ an alternative ring :

\begin{defn}\label{a2exponential}
If $a \in R$ is an alternative ring, define the following map on $pe_3(R)$: $e_{ij}(a)=id + d_1+ d_2$, where $d_1(x)=[aE_{ij},x]$ and $d_2(x)=-(aba)E_{ij}$ if the $j,i$-component
of x is $bE_{ji}$. Since $(ab)a=a(ba)$ in an alternative ring, we are safe to omit the parentheses for $aba$.
\end{defn}

\begin{prop}[\cite{Faul},(A.7)] \label{altA2gddgp} If $R$ is an alternative ring, then $e_{ij}(a)$ is an automorphism of $pe_3(R)$. Moreover,
\begin{itemize}
\item[(a)]$e_{ij}(a)e_{ij}(b)=e_{ij}(a+b)$
\item[(b)]$[e_{ij}(a),e_{jk}(b)]=e_{ik}(a+b)$ for $i,j,k$ distinct.
\item[(c)]$[e_{ij}(a),e_{jk}(b)]=id$ for $j\neq k, i \neq l$.
\end{itemize}

The group generated by $\{e_{ij}(a)| \ 1 \leq i \neq j \leq 3, a \in \biga \}$ will be denoted by ${\rm PE}_3(\biga)$.
\end{prop}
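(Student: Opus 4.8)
The plan is to read $e_{ij}(a)=\mathrm{id}+d_1+d_2$ as a denominator-free truncated exponential of the inner operator $d_1:=\ad(aE_{ij})=[aE_{ij},-\,]$ on $pe_3(R)$, and to organize every claim around this picture while using the alternative laws only to control the finitely many associators that intrude. First I would record two identities. Acting on $x\in pe_3(R)$ whose $(j,i)$-component is $\beta E_{ji}$, one has $d_1^{\,2}(x)=-\bigl(a(\beta a)+(a\beta)a\bigr)E_{ij}=-2(a\beta a)E_{ij}$ and $d_1^{\,3}(x)=0$; here $a(\beta a)=(a\beta)a=a\beta a$ is unambiguous because $a$ and $\beta$ generate an associative subalgebra (Artin's theorem), and the cube vanishes since $E_{ij}E_{ij}=0$. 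Thus $2d_2=d_1^{\,2}$ and $d_1^{\,3}=0$, so $e_{ij}(a)=\mathrm{id}+d_1+d_2$ is the integral truncation of $\exp(\ad(aE_{ij}))$, the correction $d_2$ being defined directly precisely so that no division by $2$ is needed.

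To prove $e_{ij}(a)$ is an automorphism of $pe_3(R)$ I would verify the Leibniz-type identity $e_{ij}(a)[x,y]=[\,e_{ij}(a)x,\,e_{ij}(a)y\,]$ directly. Expanding both sides and cancelling the terms that match because $d_1=\ad(aE_{ij})$ is a derivation (the Jacobi identity of $pe_3(R)$), the claim reduces to a single denominator-free relation expressing $d_2([x,y])$ through $d_1$ and $d_2$ of $x,y$; this is where one substitutes the component formula for $d_2$ and collapses the resulting products with the alternating associator identity $(x_{\sigma(1)},x_{\sigma(2)},x_{\sigma(3)})=\sgn(\sigma)(x_1,x_2,x_3)$. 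Bijectivity is then free from relation~(a) below with $b=-a$, giving $e_{ij}(a)^{-1}=e_{ij}(-a)$.

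For the relations I would argue as follows. In~(a), $[aE_{ij},bE_{ij}]=0$ (again $E_{ij}E_{ij}=0$; equivalently $2\alpha\notin A_2$), so $\ad(aE_{ij})$ and $\ad(bE_{ij})$ commute and the truncated exponentials compose additively, $e_{ij}(a)e_{ij}(b)=e_{ij}(a+b)$. For~(c) the two generators have disjoint support, all their brackets vanish, and the automorphisms commute, so their commutator is the identity. For~(b) I would compute the group commutator of the explicit maps; its value is dictated by $[\ad(aE_{ij}),\ad(bE_{jk})]=\ad\bigl((ab)E_{ik}\bigr)$, and because $i,j,k$ are distinct the matrix unit $E_{ik}$ no longer chains with $E_{ij}$ or $E_{jk}$, so every higher bracket in the Hall--Witt expansion vanishes and the commutator collapses to $e_{ik}(ab)$.

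The main obstacle is the homomorphism property of the second paragraph, together with the collapse in~(b): both are exactly where non-associativity bites. In the ambient (non-associative) matrix bracket the Jacobi defect of the triple $(aE_{ij},bE_{ij},x)$ is the associator-valued term $2(a,\beta,b)E_{ij}$, so one cannot work naively in $M_3(R)^-$; it is the realization inside the genuine Lie algebra $pe_3(R)$, combined with the alternating associator identity and Artin's theorem (any two ring elements associate), that cancels these defects. I expect the cleanest route through~(b) to push the triple products $a\,b\,(\cdot)$ that arise when $ab$ is reinserted into further brackets through the Moufang identities. This is precisely the point at which the hypotheses ``$R$ alternative'' and ``$n=3$'' are indispensable.
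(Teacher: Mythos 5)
Your truncated-exponential reading of $e_{ij}(a)$ is the right picture (it is precisely the quadratic exponential $\exp_\sigma$ of the Jordan-pair formalism), and your diagnosis of where nonassociativity bites is accurate --- but the proposal never closes the gap it identifies, and in fact contains an internal inconsistency that sinks step (a) as written. You compute $d_1$, $d_1^2$ and $d_2$ with the \emph{naive} matrix commutator of $M_3(R)$, while simultaneously invoking ``$d_1=\ad(aE_{ij})$ is a derivation (the Jacobi identity of $pe_3(R)$)''. Your own fourth paragraph shows these two are incompatible: the jacobiator of $(aE_{ij},bE_{ij},\beta E_{ji})$ is a nonzero associator multiple of $E_{ij}$. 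This is not a presentational quibble. Carried out with your stated component formulas, additivity fails: on an element with $(j,i)$-component $\beta E_{ji}$ one finds
\[
e_{ij}(a)e_{ij}(b)x-e_{ij}(a+b)x=\bigl((a\beta)b-a(\beta b)\bigr)E_{ij}=(a,\beta,b)E_{ij},
\]
which is nonzero over the octonions and is not central, so it does not die in the quotient. The flaw in your argument for (a) is that commuting $\ad$'s do \emph{not} give additivity of the truncated exponentials; what is needed is the polarization identity $d_2^{a+b}=d_2^{a}+d_2^{b}+d_1^{a}d_1^{b}$, and the naive bracket violates it by exactly one associator ($d_1^ad_1^b(\beta E_{ji})=-\bigl(a(\beta b)+(b\beta)a\bigr)E_{ij}$ naively, versus the required symmetrized value $-\bigl((a\beta)b+(b\beta)a\bigr)E_{ij}$). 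Artin's theorem rescues only the equal-argument computations such as $d_1^2=2d_2$, where flexibility $(a,\beta,a)=0$ suffices; it cannot help with two independent parameters $a,b$.

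What your appeal to ``the realization inside the genuine Lie algebra $pe_3(R)$'' hides is precisely the content of \cite[(A.7)]{Faul} --- which, note, is all the paper itself offers as proof of this proposition. For nonassociative alternative $R$, the degree-zero part of $pe_3(R)$ cannot be $\bigoplus_i RE_{ii}$ modulo center: it must be enlarged by inner derivations (left/right multiplication operators of $R$ recording associators); equivalently, $pe_3(\biga)$ is $\mathcal{TKK}(V)$ for the exceptional Jordan pair $V=(M_{12}(\biga),M_{21}(\biga))$, as the paper observes at the end of \S 1.3. In that algebra $[bE_{ij},\beta E_{ji}]=-\delta(bE_{ij},\beta E_{ji})$ is an operator, not a diagonal matrix, and its action on $aE_{ij}$ is the symmetrized triple product, which restores the polarization identity and hence (a); the automorphism property and the collapse you want in (b) and (c) then follow from the $\exp_\sigma$ machinery (\cite{loos2}, \cite{Faul}) essentially along the lines you sketch. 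So your skeleton is salvageable, but only after constructing the underlying algebra and its bracket --- the step your proposal assumes rather than proves, and the actual mathematical content of the cited result. (Two incidental points in your favor: you silently and correctly repaired the statement's typos, reading (b) as $[e_{ij}(a),e_{jk}(b)]=e_{ik}(ab)$ and (c) as concerning $e_{kl}(b)$ with $j\neq k$, $i\neq l$; these corrected forms are the ones Faulkner proves.)
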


\begin{proof}
See \cite[A.7]{Faul}.
\end{proof}

The following is an analog of Steinberg groups over associative rings:
\begin{defn}
For an alternative ring $\mathcal{A}$, denote by $ \St_3(\biga)$ the group generated by the the set of symbols $\{e_{ij}(a) | 1 \leq i\neq j \leq 3 , a \in \biga \}$ satisfying the relations (1)-(3) of Proposition \ref{altA2gddgp}.
\end{defn}

The commutator relations above endow $A_2$-gradings to both $\text{PE}_3(\mathcal{A
})$ and $\St(\mathcal{A})$. Under the standard realization $\{w_i-w_j | 1 \leq i \neq j \leq 3\}$ of $A_2$, their root subgroups are $G_{w_i-w_j}=G_{ij}:=\la e_{ij}(a),  a \in \biga \ra$. This shall be a starting point for our later considerations.


\subsection{Jordan Theory}
The above construction is actually a special case of {\it groups arising from Jordan pairs} that we will present in this section. Nice references are \cite[Section \S7-\S10]{ln} (or the short version \cite{lns}) and \cite{loos}, from which most notations in this section are borrowed.

\begin{defn}\label{quadmap}
Let $k$ be a commutative ring. A {\it quadratic map} between $k$-modules $Q: M \rightarrow N$ is one such that the following holds: \begin{itemize}
\item[(1)] For all $a \in k,\ v \in M$, $Q(av)=a^2Q(v)$,
\item[(2)] For $x,y \in M$, $Q(x+y)-Q(x)-Q(y)$ is a $k$-bilinear map.
\end{itemize}
\end{defn}
\begin{defn}[\cite{lns}]
Let $k$ be as in above, $\sigma \in \{\pm \}$.
Then a {\it{Jordan pair} }$V=(V^+, V^-)$ is a pair of $k$-modules together with a pair $(Q_+, Q_-)$ of quadratic maps $Q_\sigma: V^\sigma \rightarrow \Hom_k(V^{-\sigma}, V^{\sigma})$ such that the following identities hold in all base ring extensions of $V$:
\begin{align}
D_\sigma(x,y)Q_\sigma(x) &= Q_\sigma(x)D_{-\sigma}(y,x),\\
D_\sigma(Q_\sigma(x)y,y) &= D_\sigma(x,Q_{-\sigma}(y)x), \label{1.2}\\
Q_\sigma(Q_\sigma(x)y) &= Q_\sigma(x)Q_{-\sigma}(y)Q_\sigma(x),
\end{align}
Here $D_\sigma : V^\sigma \times V^{-\sigma} \rightarrow {\rm End}_k(V^\sigma)$, is 
the bilinear map associated to $Q$: 
\[
D_\sigma(x,y)z = Q_\sigma(x+z)y - Q_\sigma(x)y - Q_\sigma(z)y,
\]
In the succeeding text, we may also use the triple product notation $\{x,y,z\}:=D_\sigma(x,y)z$ where $x,z \in V^\sigma$, $y \in V^{-\sigma}$ . Following \cite[chapter 2.0]{loos}, the subscript $\sigma$ may be dropped
when clearly implied by the context, and we may write $Q_x y$ for $Q(x)y$.
\end{defn}

\begin{rem}
Let $k$ be as in above.
A {\it{(quadratic) Jordan algebra} }$J$ is a $k$-module on which a quadratic map $U_x: J \rightarrow \End_k(J)$ is defined, such that the following operator identities hold in all base ring extensions of $J$:
\begin{align}
V_{x,y}U_x &= U_xV_{y,x},\\
V_{U_x y,y} &= V_{x,U_y x},\\
U_{U_x y} &= U_xU_yU_x,
\end{align}
Here $V : J \times J \rightarrow {\rm End}_k(J)$ is 
the bilinear map defined by
\[
V_{x,y}(z) = U_{x+z}(y) - U_{x}(y) - U_{z}(y),
\] denoted $\{x,y,z\}$. This operator $J$ is called unital if there exists an element $1 \in J$ such that $U_1 = \text{Id}_J$.
\end{rem}

\begin{rem}\label{j&jp}
The concept of Jordan pairs is a natural generalization of (Quadratic) Jordan algebras: given any Jordan algebra $J$, the pair $(J,J)$ with $Q_x y:=U_x(y)$, where $x,y \in J$ and $U_x$ the quadratic operator in $J$, is automatically a Jordan pair since $Q_x$ and $U_x$ satisfies exactly the same defining identities. So in this setting, we will use $Q_x$ and $U_x$ interchangeably. Similarly with $V_{x,y}$ and $D(x,y)$.
\end{rem}
\begin{egs}
Among other constructions, a major source of Jordan pairs is matrices. E.g. Let $k$ be a field. if
\begin{itemize}
\item[($\alpha$)]$V^+ = V^-= \mathbb{S}_n(k)$, the symmetric $n\times n$ matrices over $k$, or
\item[($\beta$)]$V^+ = M_{pq}(R) , V^-= M_{qp}(R)$ where $M_{ij}$ are $i \times j$ matrices over an $k$-algebra $R$, or
\item[($\gamma$)]$V^+ = V^-= \mathbb{A}_n(k)$, the skew-symmetric $n\times n$ matrices over $k$ with zero diagonal,

\end{itemize}
Then $(V^+, V^-)$ is a Jordan pair where the quadratic map $Q_x y=xyx$ is just the matrix product.
\end{egs}
A $homomorphism \ h:V \rightarrow W $ {\it of Jordan pairs} is a pair of linear maps $(h_+, h_-)$ that preserves the Jordan pair structure, namely $h_\sigma: V^\sigma \rightarrow W^\sigma$ satisfies $h_\sigma(Q_x y)= Q_{h_\sigma(x)}h_{-\sigma}(y)$ for all  $x \in V^\sigma, y \in V^{-\sigma}$. The definitions of endomorphism, isomorphism and automorphism of Jordan pairs clearly follow.

\begin{rem}
If a Jordan pair can be embedded into $V=(M_{pq}(R),M_{qp}(R))$ where $R$ is an associative ring, we call it {\it special}. Elsewise we call it {\it exceptional}.
\end{rem}

\begin{egs}
For a pair $(x,y) \in V^\sigma \times V^{-\sigma}$, the Bergmann operator $B(x,y)\in \text{End}V^\sigma$ is
\[
B(x,y)= \text{Id}_{V^\sigma} - D(x,y) + Q_xQ_y
\]
If $B(y,x) \in \text{End}V^{-\sigma}$ is invertible, then $\beta(x,y):=(B(x,y) , B(y,x)^{-1})$ is an isomorphism of $V$. It is easily seen that the same concept is defined for a single Jordan algebra.
\end{egs}

The notion of a derivation is defined as follows: a pair $\Delta = (\Delta_+, \Delta_-)$ of linear maps $\Delta_\sigma \in {\rm End}(V^\sigma)$ is called a $derivation$ if Id+$\epsilon\Delta$ is an automorphism of the base ring extension $V \otimes_k k(\epsilon)$, where $k(\epsilon)$ is the ring of dual numbers. This is equivalent to the validity of the formula:
\[
\Delta_\sigma(Q_z v) = \{\Delta_\sigma(z),v,z\}+Q_z \Delta_{-\sigma}(v)
\] for all $z \in V^\sigma$, $v \in V^{-\sigma}$. As with derivations of other algebraic structures, ${\rm Der}(V)$ forms a Lie algebra under component-wise operations.

Among all derivations, we need only the following two: the inner derivations $\delta(x,y):= (D(x,y), -D(y,x))$, and the central derivation $\zeta_V = ({\rm id}_{V^+} , {\rm id}_{V^-})$. Invoking one of the main Jordan identities, it could be checked that ${\rm Inder}(V):= {\rm span} \{\delta(x,y) \mid (x,y) \in V \}$ is a Lie subalgebra of ${\rm Der}(V)$.

Setting
$\mathfrak{L}_0 = k \cdot \zeta_V + {\rm Inder}(V)$,
the celebrated procedure of Tits-Kantor-Koecher allows us to construct  from any Jordan pair $V$ the following Lie algebra
\[
\mathcal{TKK}(V)= V^+ \oplus \mathfrak{L}_0(V) \oplus V^-.
\]
with multiplication
\setcounter{equation}{7}
\[
[V^\sigma, V^{-\sigma}]=0, \ \quad
 [D,z]=D_\sigma(z), \ \quad
  [x,y]=-\delta(x,y) \tag{\theequation}
\]
for $D= (D_+, D_-) \in \mathfrak{L}_0 (V), \ z \in V^{\pm}$ and $(x,y) \in V$. It follows immediately that if $ x \in
V^\sigma , y \in V^{-\sigma}$, then \[2Q_x y=\{xyx\}=[x,[x,y]] \ \ . \setcounter{equation}{8}\tag{\theequation}\label{quadandbracket}\] This suggests that  $\frac{1}{2}ad_x^2$ is meaningful even when $\frac{1}{2}$ is 
non-existent in the base ring. Along with the fact that $ad_x^3=0$, we are enabled to make a quadratic definition of ``exponentiation'' as follows:

\vskip .2cm
For any $x \in V^\sigma (\sigma \in \{\pm \})$, ${\rm exp}_\sigma(x)$ is an endomorphism of $\mathcal{TKK(V)}$  given by:
\[ {\rm exp}_\sigma (x)z =x,\ \quad
{\rm exp}_\sigma (x)\Delta = \Delta + [x,\Delta]  ,\ \quad
{\rm exp}_\sigma (x)y = y + [x,y] +Q_x y,
\]
where $z \in V^\sigma, \ \Delta \in \mathfrak{L}_0$ and $y \in V^{-\sigma}$. It could be checked that $\exp_\sigma(z)$ is an automorphism of $\mathcal{TKK}(V)$ and
$\exp_\sigma : V^\sigma \rightarrow {\rm Aut} (\mathcal{TKK}(V))$ is actually an injective homomorphism \cite{loos2}.

Putting $U^\pm := {\rm Im(exp_\pm)}$, the {\it projective elementary group of V} is the subgroup of
${\rm Aut(\mathcal{TKK}}(V))$ generated by $U^+$ and $U^-$. Such a group will be denoted $PE(V)$.

\begin{egs}
Let's assume $\biga$ is not associative. Then ${\rm PE}_3(\biga)$, as described in \S\ref{a2}, is actually a projective elementary group with respect to the exceptional Jordan pair $V=(M_{12}(\biga), M_{21}(\biga))$ (See \cite{ln} for definitions). We will encounter another exceptional Jordan pair in $\S$ 3.
\end{egs}

\section{Establishing Relative Property $(T)$}

The main references for this section are \cite{Sh1,Ka}

As mentioned in section \ref{propt}, a typical way to prove that a group $G$ has property $(T)$
is to find a subset $K$ of such that
\begin{itemize}
\item[(a)] $K$ is a Kazhdan subset of $G$
\item[(b)] the pair $(G,K)$ has relative property $(T)$.
\end{itemize}

\begin{rem}
Quoting \cite{EJK}:

\vskip .1cm
\it {Clearly, (a) and (b) imply that $G$ has property $(T)$. Note
that (a) is easy to establish when $K$ is a large subset of $G$,
while (b) is easy to establish when $K$ is small, so to obtain
(a) and (b) simultaneously one typically needs to pick $K$
of intermediate size.}
\end{rem}
%


We're now ready to present the principal  theorem of this section.
\begin{thm}
\label{KasShalomZ}
Let $\biga$ be a finitely generated alternative ring, $\mathcal{A}\ast \mathcal{A}$ the free product of two
copies of the additive group of $\mathcal{A}$, and consider the semi-direct
product $(\mathcal{A}\ast \mathcal{A})\ltimes \mathcal{A}^2$, where the first copy of $\mathcal{A}$
acts by upper-unitriangular matrices, that is, $a\in \mathcal{A}$ acts
as left multiplication by the matrix $e_{1,2}(a)=$
$\left(
\begin{array}{cc}
1& a\\ 0& 1
\end{array}
\right)$,
and the second copy of $\mathcal{A}$
acts by lower-unitriangular matrices. Then the pair
$((\mathcal{A}\ast \mathcal{A})\ltimes \mathcal{A}^2,\mathcal{A}^2)$ has relative property $(T)$.
\end{thm}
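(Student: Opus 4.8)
The plan is to follow the spectral (dual) approach of Shalom and Ershov--Jaikin-Zapirain, and the first thing to observe is that the whole argument is \emph{associativity-agnostic}, so that ``alternative'' costs us nothing. Write $N=\mathcal A^2$ (a normal abelian subgroup, since each element of $Q:=\mathcal A\ast\mathcal A$ acts by an additive-group automorphism of $N$, and such automorphisms compose associatively irrespective of how $\mathcal A$ multiplies), and $G=Q\ltimes N$. As $\mathcal A$ is finitely generated over $\dbZ$ it is countable, so $N$ is a countable discrete abelian group and its Pontryagin dual $\widehat N$ is a compact metrizable abelian group, canonically $\widehat N\cong\widehat{\mathcal A}\times\widehat{\mathcal A}$; I write a character as $(\phi,\psi)$ with $(\phi,\psi)\binom{u}{v}=\phi(u)\psi(v)$, and $\chi_0=(1,1)$ for the trivial character. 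For a unitary representation $(\pi,V)$ of $G$ the restriction $\pi|_N$ carries a projection-valued spectral measure $P$ on $\widehat N$, and $\operatorname{range}P(\{\chi_0\})=V^N$. First I would invoke the standard reduction (see \cite{BHV}, and the use of it in \cite{Sh1,EJ}): $(G,N)$ has relative property $(T)$ if and only if there is no sequence $(\mu_n)$ of probability measures on $\widehat N$ with $\mu_n(\{\chi_0\})=0$, $\mu_n\to\delta_{\chi_0}$ weak-$\ast$, and $\mu_n$ asymptotically $Q$-invariant (i.e. $\|g_\ast\mu_n-\mu_n\|\to 0$ for $g$ ranging over the transvections attached to a finite ring-generating set of $\mathcal A$). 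So it suffices to rule out such an escaping sequence.

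Second I would record the dual action. For $a\in\mathcal A$ and $\phi\in\widehat{\mathcal A}$ let $a\cdot\phi\in\widehat{\mathcal A}$ be the character $v\mapsto\phi(av)$ built from the single left-multiplication $L_a$. A direct computation shows the upper generator $a$ acts on $\widehat N$ by $(\phi,\psi)\mapsto(\phi,\psi-a\cdot\phi)$ and the lower generator $b$ by $(\phi,\psi)\mapsto(\phi-b\cdot\psi,\psi)$. The point for the nonassociative setting is that only the operators $L_a$ appear: no triple product $a(bc)$ is ever formed, so these formulas hold verbatim over any (not necessarily associative) unital $\mathcal A$. Moreover $a\mapsto a\cdot\phi$ is a homomorphism $(\mathcal A,+)\to\widehat{\mathcal A}$ with image a subgroup $\Lambda_\phi$, and since $\mathcal A$ is unital, $1\cdot\phi=\phi$; hence $\Lambda_\phi=\{1\}\Rightarrow\phi=1$, i.e. $\phi\neq1\Rightarrow\Lambda_\phi\neq\{1\}$. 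The symmetric statement holds for $\Lambda_\psi$.

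Third, and this is the heart of the matter, I would derive a contradiction from an escaping sequence. Decompose $\widehat N\setminus\{\chi_0\}=\{\phi\neq1\}\cup\{\psi\neq1\}$. On $\{\phi\neq1\}$ the upper transvections translate the $\psi$-coordinate through the nontrivial group $\Lambda_\phi$ while fixing $\phi$; on $\{\psi\neq1\}$ the lower transvections translate $\phi$ through $\Lambda_\psi$ while fixing $\psi$. This decomposition lets each coordinate's nontriviality be handled by the \emph{opposite} transvection family, so one never composes an upper with a lower transvection --- a second reason associativity plays no role. Following \cite{Sh1,EJ}, asymptotic $Q$-invariance together with these translations forces the relevant part of any weak-$\ast$ limit to be invariant under $\overline{\Lambda_\phi}$ (resp. $\overline{\Lambda_\psi}$) along the corresponding slices, and finite generation of $\mathcal A$ as a ring is what makes these translation groups act with enough uniformity that mass cannot collapse onto the slice $\{\psi=1\}$ (resp. $\{\phi=1\}$) without charging $\chi_0$. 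Since $\mu_n(\{\chi_0\})=0$, this contradicts $\mu_n\to\delta_{\chi_0}$.

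The step I expect to be the main obstacle is exactly this no-escape/measure-rigidity argument, made uniform across \emph{all} finitely generated alternative rings, including those of positive characteristic where $\widehat{\mathcal A}$ is profinite with arbitrarily small closed subgroups. A naive ``translation by $\Lambda_\phi$ spreads the mass'' heuristic fails for individual measures --- already for $\mathcal A=\dbZ$, a character $\phi$ equal to rotation by a small irrational is moved only slightly by any fixed finite set of transvections --- so the argument must be genuinely asymptotic, working with the whole sequence and the monoid generated by both transvection families; this is the precise place where finite generation of $\mathcal A$ (and, in part~(2), of the symmetric elements as a form ring) is indispensable. My proposal is therefore to import this analytic core from \cite{Sh1,EJ}, after the routine verifications above that (i) $G=Q\ltimes N$ is well defined with $Q$ acting by additive automorphisms, and (ii) every identity invoked involves only the left-multiplication operators $L_a$ and so survives the passage from associative to alternative $\mathcal A$.
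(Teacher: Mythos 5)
There is a genuine gap, and it sits exactly where you flagged it, but it is worse than ``the main obstacle to be worked out'': your framing claim that the argument is \emph{associativity-agnostic} fails at the first composition of transvections. While a single generator $e_{1,2}(a)$ indeed acts on $\widehat{N}$ through the lone operator $L_a$, relative property $(T)$ must be witnessed by a \emph{finite} set, and the no-escape argument of \cite{Sh1,Ka,EJ} necessarily exploits the orbit of a character under the group generated by that finite set --- and there compositions do occur: $e_{1,2}(a)e_{2,1}(b)$ moves $(u,v)$ to $(u+av+a(bv),\,v+bu)$, so the ``linear part'' of a word in your witnesses lies in the associative subring of $\End_{\dbZ}(\mathcal{A})$ generated by $\{L_s \mid s\in S\}$, and in an alternative ring $L_aL_b\neq L_{ab}$. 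Concretely, the additive subgroup reachable from the unit under that subring is spanned by the right-normed products $s_{i_1}(s_{i_2}(\cdots(s_{i_{m-1}}s_{i_m})))$, and in the free alternative ring this is a \emph{proper} subgroup: $(s_1s_2)s_3$ differs from $s_1(s_2s_3)$ by the associator, which is nonzero there and not in the span of right-normed words. So your key nondegeneracy step --- that for $\phi\neq 1$ the translations $\Lambda_\phi$ available to the argument are nontrivial ``uniformly enough'' --- quantifies over all $a\in\mathcal{A}$, whereas your finite witness set only ever produces right-normed multiplications; the bridge between the two is precisely what is missing, and ``import the analytic core from \cite{Sh1,EJ}'' cannot supply it, since that core is proved for associative rings and its transport to the alternative setting is the whole content of the theorem.

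The paper closes exactly this hole with ring theory rather than analysis, and in fact never reopens the spectral argument at all. It passes to the left multiplication ring $L(\mathcal{A})$, which is always associative, and invokes Zhevlakov's theorem (Remark 2.6($\gamma$), via the $r_1$-words $a_{i_k}(a_{i_{k-1}}(\cdots(a_{i_2}a_{i_1})))$ with increasing indices) to conclude that $L(\mathcal{A})$ is a \emph{finitely generated associative} ring whenever $\mathcal{A}$ is a finitely generated alternative ring --- note the correct finite witness set is the transvections attached to the finitely many $r_1$-words, strictly larger than your $S\cup\{1\}$. It then quotes the strengthened form of Kassabov's theorem for the free associative ring $\dbZ\langle x_1,\dots,x_n\rangle$ and transports relative $(T)$ twice by soft means: once along the epimorphism of pairs sending $x_i\mapsto L_{u_i}$ and $(P_1,P_2)\mapsto(P_1(L_{u})1,\,P_2(L_u)1)$ (using that $L(\mathcal{A})\cdot 1=\mathcal{A}$ by unitality), and once along the embedding of the resulting pair into $((\mathcal{A}\ast\mathcal{A})\ltimes\mathcal{A}^2,\mathcal{A}^2)$ preserving $\mathcal{A}^2$. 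If you want to salvage your write-up, the repair is the same: replace your witness set by the $r_1$-word transvections and reduce to the associative theorem, rather than attempting the measure-rigidity argument directly over $\widehat{\mathcal{A}}$.
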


\vskip .2cm
It is obvious that we only have to prove the theorem for $\biga_n$, the free alternative rings generated by $n$ elements. So  we shall use from now on $\biga_n$ and $\biga$ interchangeably.  But before we proceed, let's first analyze the situation.

Denote by $L(\biga)$ the left multiplication ring of $\biga$, defined as the ring generated by the left multiplication operators $L_a(a \in \biga)$ where $L_a(b)=ab$. By the way the action is defined, one can replace the group $\biga \ast \biga$ by a subgroup of $L(\biga) \ast L(\biga)$. Formally speaking, this is given by the following map:

\begin{lem}\label{laisa}
There exists a monomorphsm of abelian groups $g:\biga \rightarrow L(\biga)$ given by $a \mapsto L_a$.
\begin{proof}
Since $1 \in \biga$, it is obvious that the above map is injective.
\end{proof}
\end{lem}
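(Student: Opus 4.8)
The plan is to verify the two defining properties of a monomorphism separately: that $g$ respects the additive group structure, and that it has trivial kernel. First I would fix the target: by definition $L(\biga)$ is a subring of $\End(\biga)$, the ring of additive endomorphisms of $\biga$ regarded as an abelian group, so the underlying abelian group of $L(\biga)$ is where $g$ lands. One should note at the outset that each $L_a$ genuinely lies in $\End(\biga)$, which is immediate from left distributivity: $L_a(b+c) = a(b+c) = ab + ac = L_a(b) + L_a(c)$.

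For the homomorphism property, I would evaluate $L_{a+b}$ on an arbitrary element. For any $c \in \biga$,
\[
L_{a+b}(c) = (a+b)c = ac + bc = L_a(c) + L_b(c) = (L_a + L_b)(c),
\]
using only right distributivity, which holds in any (not necessarily associative) ring. Hence $g(a+b) = g(a) + g(b)$, so $g$ is a homomorphism of abelian groups. Observe that alternativity plays no role at this step.

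Injectivity then reduces, since $g$ is additive, to showing $\ker g = \{0\}$. This is the only place the hypotheses enter: because rings in this paper are \emph{unital}, $\biga$ has an identity $1$, and I would test a putative kernel element against it. If $L_a = 0$, then in particular $L_a(1) = a \cdot 1 = a = 0$, forcing $a = 0$. Thus the kernel is trivial and $g$ is a monomorphism.

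There is no genuine obstacle here; the statement is elementary, and the author's one-line justification (``Since $1 \in \biga$, it is obvious'') is exactly this kernel computation. The only points deserving care are conceptual rather than computational: being explicit that the codomain is the additive group of the multiplication ring $L(\biga)$, and recognizing that it is \emph{unitality}, not any form of associativity or alternativity, that delivers injectivity via evaluation at $1$.
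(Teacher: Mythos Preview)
Your proof is correct and follows exactly the paper's approach: the paper's one-line justification ``Since $1 \in \biga$, it is obvious'' is precisely your kernel computation $L_a(1)=a$, and your added verification that $a \mapsto L_a$ is additive (via distributivity) simply makes explicit what the paper leaves implicit. There is nothing to add.
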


\noindent Thus the replacement is justified.

%

Subtleties arise in this nonassociative situation of ours. However, they can be circumvented if we utilize following observations (whose proof will be omitted).


\begin{rem}\label{important}
\hfill
\begin{itemize}
\item[($\beta$)]
\cite[pg8,13,19]{ivan} The free alternative ring $\dbZ_\mathfrak{A} \la X \ra$ is a graded ring, namely
\[
\dbZ_\mathfrak{A} \la X \ra = \bigoplus\limits_{i=0}^\infty \la X \ra ^i
\]
where $\la X \ra$ is the ideal generated by the set $X$.

\item[($\gamma$)]\cite{zhev}\cite[pg122 .3,.4]{ivan} Let $\biga$ be an alternative ring with a set of generators $\{a_i\}_{i \in I}$. Then the left multiplication ring $L(\biga)$
is generated by
$S_{L(\biga)}:= \{L_u \mid u=(a_{i_k}(a_{i_{k-1}}(\cdots (a_{i_3}(a_{i_2}a_{i_1})))\ldots)) ,i_1<i_2< \ldots<i_k \}$.
Following \cite{ivan}, we call the set of $u$'s involved in $S_{L(\biga)}$ the set of $r_1$-words. Since finite generation of $\biga$ implies the finiteness of the corresponding set of $r_1$-words, $L(\biga)$ is finitely generated under this assumption. It is clear that $L(\biga)$ is associative.

For brevity we use the notation $\mathscr{S} := S_{L(\biga)} \cup \{Id_\biga \}$.
\end{itemize}
\end{rem}
\vskip .1 cm

Instead of proving the precise statement of Theorem \ref{KasShalomZ}, we'll handle a slightly stronger version that is
a direct consequence of

\begin{thm*}\cite[Theorem 1.2]{Ka}

let $R$ be any finitely generated associative ring, $R \ast R$ the free product of two
copies of the additive group of $R$, and consider the semi-direct
product $(R\ast R)\ltimes R^2$, where the first copy of $R$
acts by upper-unitriangular matrices, that is, $a\in R$ acts
as left multiplication by the matrix $e_{1,2}(a)=$
$\left(
\begin{array}{cc}
1& a\\ 0& 1
\end{array}
\right)$,
and the second copy of $R$
acts by lower-unitriangular matrices. Then the pair
$((R\ast R)\ltimes R^2,R^2)$ has relative property $(T)$.
\end{thm*}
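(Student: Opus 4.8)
The plan is to follow the dual-group (spectral) method of Burger and Shalom \cite{Sh1}, specialised to this semidirect product. Write $\Gamma=(R\ast R)\ltimes A$ with $A=R^2$ the abelian normal subgroup; note that the two copies of $R$ act on $A$ through the unitriangular automorphisms, so the $\Gamma$-action on $A$ factors through $\EL_2(R)$. One harmless preliminary simplification: every finitely generated associative ring is a quotient of a free associative ring $F=\dbZ\langle x_1,\dots,x_k\rangle$, and the resulting ring surjection $F\twoheadrightarrow R$ induces an $\EL_2$-equivariant surjection $(F\ast F)\ltimes F^2\twoheadrightarrow\Gamma$ carrying $F^2$ onto $A$; since relative property $(T)$ passes to quotients, it is enough to keep $R$ finitely generated (and one may even take $R=F$).

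Given a unitary representation $V$ of $\Gamma$ with an $(S,\eps)$-invariant unit vector $v$, I would restrict to the countable abelian group $A$ and invoke the SNAG/Mackey theorem: $V|_A$ is encoded by a projection-valued measure $E$ on the compact Pontryagin dual $\widehat{A}\cong\widehat{R}\times\widehat{R}$, and the $A$-fixed vectors are exactly the range of $E(\{\mathbf 1\})$, where $\mathbf 1$ is the trivial character. The scalars $\nu(B)=\langle E(B)v,v\rangle$ form a probability measure on $\widehat A$, and the Burger--Shalom reformulation of relative property $(T)$ for the abelian pair $(\Gamma,A)$ is: it holds if and only if there is no sequence of $\Gamma$-asymptotically-invariant probability measures on $\widehat A$ that concentrate at $\mathbf 1$ while assigning it no mass. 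Equivalently, the goal becomes to exhibit a finite set $S\subseteq\Gamma$, a neighbourhood $U\ni\mathbf 1$, and $c>0$ such that every probability measure supported on $U\setminus\{\mathbf 1\}$ is displaced by at least $c$ under some $s\in S$.

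To analyse the dual dynamics I would write it explicitly: up to sign, an element $a$ of the first copy fixes $\chi_1$ and sends $(\chi_1,\chi_2)\mapsto(\chi_1,\chi_2-a^{\ast}\chi_1)$, where $(a^{\ast}\chi_1)(y)=\chi_1(ay)$, and symmetrically the second copy translates $\chi_1$ by $b^{\ast}\chi_2$; associativity is what makes $a\mapsto a^{\ast}$ an honest action of $R$ on $\widehat R$. The delicate point is that a measure concentrating at $\mathbf 1$ cannot be defeated pointwise, since nearby characters move only slightly; rather, following Shalom, one rescales such a concentrating sequence to obtain a nonzero invariant (or stationary) mean on the ``punctured tangent space'' at $\mathbf 1$, on which $\EL_2(R)$ acts linearly. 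Already the line $\dbZ\cdot 1\subseteq R$ embeds a copy of $\SL_2(\dbZ)$ acting with hyperbolic, north--south dynamics, and non-amenability of the acting group forbids an invariant mean in those directions; the remaining directions of $\widehat R$ must be controlled using the other generators of $R$.

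The main obstacle is precisely this last no-invariant-mean estimate for $\EL_2(R)\curvearrowright\widehat{R^2}$ at the trivial character, uniformly over a general finitely generated associative $R$: one must rule out mass accumulating at $\mathbf 1$ in every direction of the possibly infinite-dimensional dual $\widehat R$, not merely along the rational line coming from $\dbZ\cdot 1$, and extract a single constant $c$. This is the technical heart of \cite{Ka}; finite generation of $R$ is exactly what cuts the problem down to finitely many elements and directions and makes $S$, $U$, and $c$ uniform. Once the gap is established, the Burger--Shalom criterion closes the argument; if one prefers to build $A=R^2$ from smaller pieces carrying relative $(T)$ rather than treat it at once, Lemma \ref{bddgen} is available for the reassembly.
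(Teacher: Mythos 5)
You should know at the outset that the paper does not prove this statement at all: it is imported verbatim as a black box from \cite[Theorem 1.2]{Ka}, and what the paper actually goes on to use is the sharper extraction recorded immediately after it (the unnumbered Theorem$^{*}$), namely that Kassabov's argument already gives relative property $(T)$ for the pair $((\dbZ S \ast \dbZ S)\ltimes R^2, R^2)$ with $S$ a finite generating set of $R$ together with $1$. Measured against that, your outline is an accurate reconstruction of the Burger--Shalom--Kassabov strategy underlying the cited proof: reduction to the free associative ring, the SNAG decomposition of $V|_{R^2}$ into a projection-valued measure on $\widehat{R^2}$, the reformulation of relative $(T)$ in terms of probability measures concentrating at the trivial character $\mathbf{1}$, and the correct explicit description of the dual dynamics of the two unitriangular copies of $R$ (where, as you note, associativity is what makes $a\mapsto a^{\ast}$ an action).

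As a proof, however, it has a genuine gap, and you name it yourself: the uniform displacement estimate --- producing a finite $S$, a neighbourhood $U\ni\mathbf{1}$, and $c>0$ moving every probability measure on $U\setminus\{\mathbf{1}\}$ --- \emph{is} the content of \cite[Theorem 1.2]{Ka}, so deferring ``the technical heart'' to \cite{Ka} is circular when \cite[Theorem 1.2]{Ka} is the statement to be proved. Moreover, the one substantive idea you offer toward that estimate is insufficient: non-amenability of the acting group does not by itself rule out an invariant mean on the punctured dual (non-amenable groups admit many amenable actions), and the copy of $\SL_2(\dbZ)$ generated by $\dbZ\cdot 1$ controls only the directions of $\widehat{R}$ dual to the subring generated by $1$; for a general finitely generated $R$ one must use the free product structure and the finitely many ring generators to produce, for each character near $\mathbf{1}$, a specific group element displacing it by a definite amount, uniformly. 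Until that quantitative step is carried out, what you have is a faithful roadmap of Kassabov's proof rather than an independent proof. Your preliminary reduction to the free ring is correct and harmless, but note that within this paper it parallels the \emph{subsequent} Proposition (which maps $(\dbZ S \ast \dbZ S)\ltimes R^2$ onto $(\dbZ\varPhi \ast \dbZ\varPhi)\ltimes \mathscr{A}^2$ to handle the alternative case), not anything required for the quoted associative theorem itself.
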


Or rather, of what the author's argument actually proved:

\begin{thmstar*}
Let the set $S$ be the finite generating set of $R$ adjoining the unit element. Then the pair
 $(( \dbZ S  \ast \dbZ S )\ltimes R^2,R^2)$ has relative property $(T)$. Here $\dbZ S $ is the additive subgroup of $R$ generated by $S$.
\end{thmstar*}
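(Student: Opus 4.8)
\emph{Proof proposal.} The plan is to trace Kassabov's proof of the preceding theorem and to record that every group element it feeds into the Kazhdan set already lies in the subgroup $(\dbZ S \ast \dbZ S)\ltimes R^2$; since the conclusion concerns the unchanged normal subgroup $A := R^2$, nothing else must be altered. I would first set up the standard dual-group reduction. As $A$ is a countable discrete abelian group, its Pontryagin dual $\widehat A = \widehat R \times \widehat R$ is compact metrizable, and the SNAG theorem attaches to any unitary representation $(\pi,V)$ and unit vector $v$ a projection-valued measure $E$ on $\widehat A$, hence a scalar probability measure $\mu = \langle E(\cdot)v,v\rangle$. The conjugation relation $\pi(\gamma)E(B)\pi(\gamma)^{-1} = E(\gamma B)$ turns almost-invariance of $v$ under a group element $\gamma$ into almost-invariance of $\mu$ under the induced transformation of $\widehat A$, and a nonzero $A$-invariant vector is obtained once $\mu$ charges the trivial character $1 \in \widehat A$. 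Thus relative property $(T)$ reduces to a uniform statement: there is a finite family of transformations of $\widehat A$, arising from a finite subset of the acting group, that moves by a definite amount every probability measure supported off $\{1\}$.

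Next I would compute the dual action of the two free factors. Writing a character as a pair $(\xi,\eta)\in \widehat R\times\widehat R$ and letting $L_a^*\xi := \xi\circ L_a$ denote the transpose of left multiplication, a direct check gives that the upper copy acts by the shear $(\xi,\eta)\mapsto(\xi,\eta - L_a^*\xi)$ and the lower copy by $(\xi,\eta)\mapsto(\xi - L_a^*\eta,\eta)$. The decisive observation for the refinement is that these formulas involve $a$ only through $L_a^*$, and that any product $L^*_{s_k\cdots s_1}$ is realized by composing the shears attached to the individual generators $s_i\in S$ — that is, by the alternating ping-pong words in the two free factors, all of which already lie in $\dbZ S \ast \dbZ S$. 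Consequently the argument never needs a single transvection $e_{12}(a)$ or $e_{21}(a)$ with $a\notin\dbZ S$: the ring multiplication enters the dynamics solely through compositions of the $S$-shears.

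With this in hand I would run Kassabov's mixing estimate. Since the two families of shears fix complementary coordinate subgroups and genuinely move any measure off them, the free-product dynamics on $\widehat A\setminus\{1\}$ admits no almost-invariant measure, with a uniform gap; the finite Kazhdan set can then be taken to be $\{e_{12}(s),e_{21}(s):s\in S\}$ together with finitely many elements of $A = R^2$. The only place the ring itself is used is that $S$ generates $R$: this guarantees both that a nonzero additive character $\xi$ is detected by some $L_s^*\xi$ (already $s=1$ suffices, as $1\in S$) and, quantitatively, that the $S$-shears spread any measure on the dual of a coordinate with a constant uniform over $\widehat R$. Since this hypothesis is intact after replacing each free factor $R$ by $\dbZ S$, the estimate is unchanged, and concentration of $\mu$ at $1$ yields a nonzero $A$-invariant vector for the full $A = R^2$.

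The main obstacle is the bookkeeping underlying the previous paragraph. One must verify that the explicit finite set and Kazhdan constant produced by Kassabov's argument depend on $R$ only through the finite set $S$ and the abstract ring identities, and in particular that no step implicitly lets $a$ range over all of $R$. This last point is essentially automatic from the ping-pong realization of products by $S$-shears; the genuinely delicate input is the quantitative one — a lower bound, uniform over the compact group $\widehat R$, for how far the shears indexed by $S$ displace a probability measure supported away from the trivial character. It is this uniformity, rather than the mere non-triviality of the displacement, that is the spectral-gap heart of the argument, and it is what must be checked to survive the restriction of each free factor from $R$ to $\dbZ S$.
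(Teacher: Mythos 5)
Your proposal follows essentially the same route as the paper: the paper gives no independent argument for this statement, but simply observes that Kassabov's proof of \cite[Theorem 1.2]{Ka} --- the same SNAG/dual-measure reduction and shear dynamics on $\widehat{R^2}$ that you sketch --- only ever invokes the transvections $e_{12}(s)$, $e_{21}(s)$ with $s\in S$, and hence already establishes relative property $(T)$ for the pair $((\dbZ S \ast \dbZ S)\ltimes R^2, R^2)$. Your bookkeeping (the dual action entering only through $L_s^*$, compositions of $S$-shears realizing the $L^*$ of monomials, the uniform displacement estimate being unaffected by restricting the free factors to $\dbZ S$, and the Kazhdan set including finitely many elements of $R^2$) is exactly the verification of Kassabov's argument that the paper leaves implicit.
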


We are now ready to prove Theorem \ref{KasShalomZ} as an implication of the following:

\begin{prop}
Let $\mathscr{A}$ be an abelian group; $\varPhi:=\{\varphi_1, \ldots \varphi_n\} \cup \{Id_{ \mathscr{A}} \} \subset \End_{\mathbb{Z}}(\mathscr{A})$. 
Assume that there exists an element $a \in \mathscr{A}$ such that $\mathscr{A} =\la \varPhi \ra a$, where $\la \varPhi \ra$ is the ring generated by $\varPhi$. If we define the action as in Theorem \ref{KasShalomZ}, then
$(( \dbZ \varPhi  \ast \dbZ \varPhi )\ltimes \mathscr{A}^2,\mathscr{A}^2)$ has relative property $(T)$.
\end{prop}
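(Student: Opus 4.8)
The plan is to reduce the statement to the \emph{associative} case handled by Kassabov's theorem (the second starred version above), exploiting that $\la \varPhi \ra$ is an associative ring. The key observation is that $R := \la \varPhi \ra$, being a subring of $\End_\dbZ(\mathscr A)$, is associative and unital with unit $\mathrm{Id}_{\mathscr A} \in \varPhi$, and it is finitely generated as a ring by the finite set $\varPhi$. Taking $S := \varPhi$ in the starred theorem, I obtain that the pair $\bigl( (\dbZ \varPhi \ast \dbZ \varPhi) \ltimes R^2, R^2 \bigr)$ has relative property $(T)$. Note that the additive subgroup of $R$ generated by $\varPhi$ coincides with the additive subgroup $\dbZ \varPhi$ of $\End_\dbZ(\mathscr A)$ appearing in the statement (both are the $\dbZ$-span of $\varPhi$, and $\varPhi \subset R \subset \End_\dbZ(\mathscr A)$), so there is no clash of notation.

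Next I would use the cyclicity hypothesis $\mathscr A = \la \varPhi \ra a = R a$ to realize the target pair as a quotient. Consider the evaluation map $q : R \to \mathscr A$, $q(r) = r a$. By hypothesis it is surjective, and it is a homomorphism of left $R$-modules: for $\varphi, r \in R$ one has $q(\varphi r) = (\varphi r)(a) = \varphi\bigl(r(a)\bigr) = \varphi \cdot q(r)$, where the middle equality is exactly associativity of composition in $\End_\dbZ(\mathscr A)$. In particular $q$ intertwines the action of every element of $\dbZ \varPhi$, so $q \oplus q : R^2 \to \mathscr A^2$ is equivariant for both the upper- and lower-unitriangular actions. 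Hence the identity on $\dbZ \varPhi \ast \dbZ \varPhi$ together with $q \oplus q$ assemble into a surjective group homomorphism
\[
\pi : (\dbZ \varPhi \ast \dbZ \varPhi) \ltimes R^2 \ \longrightarrow \ (\dbZ \varPhi \ast \dbZ \varPhi) \ltimes \mathscr A^2 , \qquad \pi(R^2) = \mathscr A^2 .
\]

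Finally I would invoke the standard fact that relative property $(T)$ of a pair descends along surjections. Concretely, writing $G$ for the source group, and given $\eps > 0$, choose a finite $S \subset G$ and $\mu > 0$ witnessing relative $(T)$ of $(G, R^2)$ at level $\eps$; then I claim $\pi(S)$ and $\mu$ witness relative $(T)$ of the target pair at level $\eps$. Indeed, any unitary representation of the quotient pulls back along $\pi$ to a representation of $G$ on the same space; a $(\pi(S), \mu)$-invariant vector is then $(S, \mu)$-invariant upstairs, hence $(R^2, \eps)$-invariant by the choice of $S,\mu$, and since $\pi(R^2) = \mathscr A^2$ this says precisely that it is $(\mathscr A^2, \eps)$-invariant downstairs. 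This yields relative property $(T)$ for $\bigl( (\dbZ \varPhi \ast \dbZ \varPhi) \ltimes \mathscr A^2, \mathscr A^2 \bigr)$, as desired.

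I expect the only real subtlety to be bookkeeping rather than a genuine obstacle. One must keep straight that $\varphi \in \dbZ \varPhi$ acts on $R^2$ by ring multiplication within $R = \la\varPhi\ra$, whereas on $\mathscr A^2$ it acts by its native action on $\mathscr A$, and that $q$ correctly intertwines the two; this is the content of the identity $q(\varphi r) = \varphi \cdot q(r)$ and is where associativity of $R$ is used. The hypothesis is precisely engineered so that $R$ is finitely generated and $\mathscr A$ is a cyclic $R$-module, which is exactly what converts the associative input of Kassabov's theorem into the desired output. (When this proposition is later applied with $\mathscr A = \biga$, $R = L(\biga)$, and generator $a = 1$, the associativity of $L(\biga)$ noted in Remark~\ref{important}$(\gamma)$ is what makes the whole reduction legitimate in the nonassociative setting.)
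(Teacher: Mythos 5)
Your proof is correct and takes essentially the same route as the paper: both feed an associative pair into the starred theorem and then push relative property $(T)$ forward along the surjection of pairs given by evaluation at $a$ on the module part. The only (cosmetic) difference is that the paper uses the free associative ring $\dbZ\langle x_1,\ldots,x_n\rangle$ with $x_i\mapsto\varphi_i$ as the associative input, whereas you apply the starred theorem directly to $\langle\varPhi\rangle\subset\End_{\dbZ}(\mathscr{A})$; your epimorphism is exactly the paper's map factored through the substitution homomorphism, so the two arguments coincide in substance.
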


\begin{proof}
Set $R:=\dbZ \la x_1 ,\ldots , x_n \ra, \ S:= \{x_1, \ldots ,x_n\}$. There exists a map $f$ between the pairs $(( \dbZ S  \ast \dbZ S )\ltimes R^2,R^2)$ and $(( \dbZ \varPhi  \ast \dbZ \varPhi )\ltimes \mathscr{A}^2,\mathscr{A}^2)$, given by the data $f|_{\dbZ S}: x_i \mapsto \varphi_i ;  1 \mapsto Id_\mathscr{A} $
and
$f|_{R^2}: (P_1(x_1, \ldots ,x_n) , P_2(x_1, \ldots ,x_n)) \rightarrow (P_1(\varphi_1, \ldots ,\varphi_n)a , P_2(\varphi_1, \ldots ,\varphi_n)a) ; \  1 \mapsto a$. It is clear that this map extends to an epimorphism of groups. 
\end{proof}

\noindent{\it Proof of Theorem \ref{KasShalomZ}}
\ \ Define $g$ as in Lemma \ref{laisa}. Recall that $\mathscr{S} := S_{L(\biga)} \cup \{Id_\biga \}$. Setting  $\varPhi$ to be $\mathscr{S}$, $\mathscr{A}$ to be $\biga$, $a=1$ in the previous proposition, we get that $(( \dbZ \mathscr{S}  \ast \dbZ \mathscr{S} )\ltimes \biga^2,\biga^2) \cong (( g^{-1}(\dbZ \mathscr{S})  \ast g^{-1}(\dbZ \mathscr{S}) )\ltimes \biga^2,\biga^2)$ has relative property $(T)$, which implies Theorem \ref{KasShalomZ} since the latter embeds into $((\mathcal{A}\ast \mathcal{A})\ltimes \mathcal{A}^2,\mathcal{A}^2)$, preserving $\biga^2$.

\begin{rem}\hfill
\begin{itemize}
\item[(1)]The above proof could be extended effortlessly to any non-associative unital ring $R$, as long as the left multiplication ring $L(R)$ is finitely generated .
\item[(2)]The theorem works just as well if the action is given instead by right multiplication.
\end{itemize}
\end{rem}

Combining the above result with \cite[Theorem 5.1]{EJK},
We can now prove the first part of Theorem \ref{mainresult}:

\begin{thm}
For $\mathcal{A}$ a finitely generated alternative ring, $\St_3(\mathcal{A})$ is Kazhdan.
\end{thm}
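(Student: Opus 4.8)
The plan is to follow the two-step scheme recalled at the start of this section: exhibit a Kazhdan subset $K$ of $G:=\St_3(\biga)$, and then show that the pair $(G,K)$ has relative property $(T)$. For the Kazhdan subset I would take $K=\bigcup_{i\neq j}G_{ij}$, the union of the six root subgroups of the $A_2$-grading constructed in \S\ref{a2}. The root system $A_2$ is irreducible of rank $2$, and there is no collapse for $n=3$ over an alternative ring (the no-collapse Remark following the definition of $\St_n$), so the hypotheses of \cite[Theorem 5.1]{EJK} are met and that result shows $K$ is a Kazhdan subset of $G$. This settles requirement (a), and the entire weight of the argument then falls on establishing relative property $(T)$ for $(G,K)$.

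For requirement (b) I would first reduce — using that relative property $(T)$ is inherited by subsets and by finite unions of subsets — to proving that $(G,G_{ij})$ has relative property $(T)$ for a single root subgroup, the remaining ones following by permuting the indices $1,2,3$. Here Theorem \ref{KasShalomZ} is the engine. Using the commutator relations (a)--(c) of Proposition \ref{altA2gddgp} one checks that $G_{13}$ and $G_{23}$ commute, so that $V:=G_{13}G_{23}\cong\biga^2$, and that conjugation by $e_{12}(t)$ and $e_{21}(s)$ acts on $V$ — in the coordinates $(a,b)\leftrightarrow e_{13}(a)e_{23}(b)$ — by $(a,b)\mapsto(a+tb,b)$ and $(a,b)\mapsto(a,sa+b)$ respectively. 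These are precisely the upper- and lower-unitriangular actions of Theorem \ref{KasShalomZ}, with $\biga$ acting by left multiplication.

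These identities let me define a homomorphism $\psi\colon(\biga\ast\biga)\ltimes\biga^2\to G$ sending the two free factors onto $G_{12}$ and $G_{21}$ and the normal $\biga^2$ onto $V$; call its image $H\le G$. Since relative property $(T)$ transfers along a surjection onto its image, Theorem \ref{KasShalomZ} yields that $(H,V)$ has relative property $(T)$; and since relative property $(T)$ of $(H,B)$ for a subgroup $H\le G$ passes to $(G,B)$ upon restricting representations, we conclude that $(G,V)$ has relative property $(T)$. Because $V\supseteq G_{13},G_{23}$, both $(G,G_{13})$ and $(G,G_{23})$ inherit it; cycling the three indices (equivalently, using the Weyl-group symmetry of the $A_2$-grading) produces all six root subgroups, and their finite union reconstitutes $(G,K)$. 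Combined with the Kazhdan property of $K$, this gives property $(T)$ for $\St_3(\biga)$.

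The step I expect to be the main obstacle is the bookkeeping in the nonassociative setting: one must confirm that the conjugation action of $G_{12}$ and $G_{21}$ on $V$ is \emph{exactly} left multiplication by $\biga$, so that the hypotheses of Theorem \ref{KasShalomZ} are literally satisfied, and that the absence of collapse for $n=3$ over an alternative ring guarantees $V\cong\biga^2$ rather than a proper quotient. Once these are secured by the commutator identities (b)--(c) and the no-collapse Remark, the two transfers of relative property $(T)$ — through the quotient map $\psi$ and through the inclusion $H\le G$ — are purely formal, and the Kazhdan-subset input from \cite[Theorem 5.1]{EJK} closes the argument.
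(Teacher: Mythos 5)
Your proposal is correct and takes essentially the same route as the paper: the identical two-step scheme, with $K=\bigcup_{i\neq j}G_{ij}$ a Kazhdan subset via \cite[Theorem 5.1]{EJK} (the paper checks the \emph{strong} grading hypothesis rather than non-collapsing, but for unital $\biga$ this is immediate from $[e_{ij}(a),e_{jk}(1)]$), and relative property $(T)$ for each $G_{ij}$ obtained by pushing Theorem \ref{KasShalomZ} through a homomorphic image of $(\biga\ast\biga)\ltimes\biga^2$ inside $\St_3(\biga)$ and permuting indices. The only cosmetic difference is your configuration --- $G_{12},G_{21}$ acting on $G_{13}G_{23}$ by left multiplication --- versus the paper's $T_{12}=\la G_{23},G_{32}\ra\ltimes\la G_{12},G_{13}\ra$, a right-multiplication variant explicitly covered by the remark following Theorem \ref{KasShalomZ}.
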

\begin{proof}
It is obvious that $\St_3(\biga)$ has a strong $A_2$ grading in the sense of \cite[\S 4.4]{EJK}, which roughly means that if $\gamma \in A_2$ is a positive non simple root, then $G_\gamma$, the corresponding root subgroup in $\St_3(\biga)$ is contained in the subgroup generated by the simple-root subgroups. This implies that \cite[Theorem 5.1]{EJK} applies, giving $\kappa(\St_3(\biga),\bigcup G_{ij}) >0$

To finish the proof, it is necessary to establish $\kappa_r(\St_3(\biga),G_{ij}; S) >0$ for some  finite set $S \subset \St_3(\biga)$. Every root subgroup $G_{ij}$ of $\St_3(\biga)$ belongs to some subgroup $N_{ij}$ isomorphic to $\biga^2$, which is in turn contained naturally in $T_{ij}$, a homomorphic image of $(\biga \ast \biga)\ltimes \biga^2$ embedded inside $\St_3(\biga)$ (See \cite[Lemma 2.4]{Sh1})(For example,
we can set $T_{12}= \la G_{23},G_{32}\ra \ltimes \la G_{12}, G_{13} \ra$). Therefore Theorem \ref{KasShalomZ} guarantees that for every $G_{ij}$ there exists a finite subset $S_{ij} \subset T_{ij}$ such that
\[
\delta_{ij}:=\kappa_r(\St_3(\biga),G_{ij}; \bigcup\limits_{i \neq j} S_{ij})
\geq
\kappa_r(\St_3(\biga),N_{ij}; \bigcup\limits_{i\neq j} S_{ij})
\geq
\kappa_r(T_{ij},N_{ij}; S_{ij})
>
0 .
\]
Since
$\kappa_r(\St_3(\biga),\bigcup G_{ij}; \bigcup S_{ij}) \geq \inf\limits_{i \neq j}\delta_{ij} > 0$ , the Theorem is proved.
\end{proof}

It is known that Property $(T)$ is inherited by homomorphic images. Consequently we have

\begin{cor}
$\text{PE}_3(\biga)$ is Kazhdan.
\end{cor}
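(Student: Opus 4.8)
The plan is to realize $\mathrm{PE}_3(\biga)$ as a homomorphic image of $\St_3(\biga)$ and then invoke the fact, recalled immediately above the corollary, that property $(T)$ passes to quotients.

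First I would write down the canonical surjection. By definition $\St_3(\biga)$ is the group on abstract generators $\{e_{ij}(a)\}$ subject only to the Steinberg relations, while $\mathrm{PE}_3(\biga)$ is generated by the genuine automorphisms $e_{ij}(a)$ of $pe_3(\biga)$. Proposition \ref{altA2gddgp} states precisely that these automorphisms obey the identical set of commutator relations. Hence, by the universal property of a group presentation, the assignment sending each abstract symbol $e_{ij}(a)$ to the corresponding automorphism respects all defining relations, and so extends uniquely to a homomorphism $\pi \colon \St_3(\biga) \to \mathrm{PE}_3(\biga)$. Since the image of $\pi$ contains every generator of $\mathrm{PE}_3(\biga)$, the map is surjective.

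Next I would transport property $(T)$ across $\pi$. By the theorem just proved, $\St_3(\biga)$ is Kazhdan, so it admits a finite Kazhdan set $S$ together with some $\eps > 0$. I claim $\pi(S)$ is a finite Kazhdan set for $\mathrm{PE}_3(\biga)$: given a unitary representation $V$ of $\mathrm{PE}_3(\biga)$ carrying a $(\pi(S),\eps)$-invariant vector, the pullback $V \circ \pi$ is a unitary representation of $\St_3(\biga)$ for which the same vector is $(S,\eps)$-invariant, since $\norm{s\cdot v - v} = \norm{\pi(s)\cdot v - v}$. Property $(T)$ of $\St_3(\biga)$ then produces a nonzero $\St_3(\biga)$-fixed vector, which, by surjectivity of $\pi$, is fixed by all of $\mathrm{PE}_3(\biga)$. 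Thus $\mathrm{PE}_3(\biga)$ is Kazhdan.

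There is no genuine obstacle in this argument: the single point requiring verification is that the generators of $\mathrm{PE}_3(\biga)$ truly satisfy every Steinberg relation, so that $\pi$ is well defined, and this is exactly the content of Proposition \ref{altA2gddgp}. The corollary is therefore immediate.
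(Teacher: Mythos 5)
Your proposal is correct and follows exactly the paper's route: the paper also deduces the corollary from the surjection $\St_3(\biga)\twoheadrightarrow \mathrm{PE}_3(\biga)$ (well-defined by Proposition~\ref{altA2gddgp}) together with the standard fact that property $(T)$ passes to homomorphic images. You merely spell out the verification that a quotient of a Kazhdan group is Kazhdan, which the paper cites as known.
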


\section{A Symplectic {Group over Alternative Rings}}

%

In this section , $\biga$ will always be a finitely generated alternative ring with a nuclear involution $*$, namely every $*$-symmetric element $\alpha$ satisfies the identity $(\alpha,a,b) := (\alpha a)b- \alpha(ab)=0$ for $ a,b \in \biga$. The set of
$\ast$-symmetric elements will be denoted as $Sym(\biga)$. We will also use the standard realization of $C_3: \{\pm w_i \pm w_j\} \cup \{\pm 2w_i\}$. For the Jordan identities involved, see \cite[\S7.6]{ln}

\vskip .4cm
Aside from the the aforementioned $A_2$ case, there are more instances of low rank ``root graded groups'' that are defined over alternative rings. We'll describe such an example below. It will be referred to as a "symplectic group" since such group comes with a $C_3$-grading.

    More precisely, the group we are to consider is
${\rm PE}(V)$, where $V$ is the Jordan pair $(H_3(\biga,*),H_3(\biga,*))$ (which is exceptional when $\biga$ is not associative). Here $H_3(\biga,*)$ (usually  called Hermitian matrices) is the linear subspace of $M_3(\biga)$ consisting of symmetric elements under the involution map $J_1: (x_{ij}) \rightarrow (x_{ji}^*)$ of the full matrix space. To avoid confusion, we may use superscripts $\pm$ to distinguish elements from the two components.

As in Remark \ref{j&jp}, the Jordan pair structure on $V$ is dictated by the Jordan algebra structure of $H_3(\biga,*)$,
which is well understood. We will refer to \cite[Pg 1075]{mc} for a fully worked out (quadratic) Jordan multiplication table with respect to the following basis of $\mathcal{H}_3(\biga, \ast)$, given in terms of matrix units $E_{ij} (1\leq i,j\leq 3)$:
\begin{align*}
\alpha[ii]&:=\alpha E_{ii} \ , \ \alpha=\alpha^* \in \biga \\
d[ij]&:= d E_{ij} + d^* E_{ji} \ , \ d \in \biga \\
[ij]&:= E_{ij} + E_{ji} \ , [ii]:= E_{ii}
\end{align*}

Note that under this notation, $d[ij]:= d E_{ij} + d^* E_{ji}=d^*[ji]$.

\begin{rem}
Denote by $xy$ the usual matrix product in $\mathcal{H}_3(\biga, \ast)$, and the brace product $\{x,y\} := xy+yx$. Then
the expression $U_x(y)=\frac{1}{2}\{x,\{x,y\}\} - \{x^2,y\}$ is well defined, regardless of the presence (or lack thereof) of $\frac{1}{2}$ in
$\mathcal{H}_3(\biga, \ast)$.
\end{rem}

\begin{rem}\label{multh3}
(\cite[Pg 1075]{mc}) To determine the products in $\mathcal{H}_3(\biga, \ast)$,  one has to go no further than computing them for the basis elements. They are as follows:

Quadratic products ($U_x$):
\begin{align*}   U_{\alpha[ii]}{\beta[ii]}&=\alpha\beta\alpha[ii], \\  U_{d[ij]}{b[ij]}&=db^*d[ij] \ (i \neq j) ,  \\
U_{d[ij]}{\beta[jj]}&=dbd^*[ij] \ (i \neq j) \  ,\\ 
U_{a[ij]}{b[kl]}&=0 \ \ {\it if} \ \{k,l\} \neq \{i,j\}
\end{align*}

Triple products ($\{x,y,z\}$):
\begin{align*}  
\{d[ij],b[jk],c[kl]\}&=\frac{1}{2}(d(bc)+(db)c)[il], \\  
\{d[ij],b[ji],c[ik]\}&=d(bc)[ik], \\  
\{d[ij],b[jk],c[ki]\}&=(d(bc)+(bc)^*d^*)[ii], \\  
\{d[ij],b[ji],c[ii]\}&=(d(bc)+(bc)^*d^*)[ii].
\end{align*}
Also, a triple product on basis elements can be non zero if and only if  it can be written in the form  $\{a[ij],b[kl],c[pq]\}$, where $j=k$ and $p=q$. 
\end{rem}

Let's compare $\fkg=\mathcal{TKK}(V)$ to the usual symplectic Lie algebra $\mathfrak{c}=\mathfrak{sp}_6(F)$, where $F$ is an algebraically closed field:

In matrix terms, the condition for
$x= \left(
\begin{array}{cc}
A& B\\ C& D
\end{array}
\right) \in \mathfrak{c}$
$(A,B,C,D \in \mathfrak{gl}(3,F))$ to be symplectic is that $B^T=B, C^T=C,$ and $A^T= -D$. Under this realization, entries of $B$ (resp. $C$) correspond to the root subspaces of $\{w_i+w_j\} \cup \{2w_i\}$(resp. $\{-w_i-w_j\} \cup \{-2w_i\}$), while
$\left(
\begin{array}{cc}
A& 0\\0& D
\end{array}
\right)$
contains the root subspaces of $\{w_i-w_j\}$ and the Cartan subalgebra.

 This root decomposition actually carries over to $\mathcal{TKK}(V)$(see \cite{Jac}) when we treat $V^\pm$ like the upper left and lower right block matrices above. Explicitly, the root subspaces are:
\vskip .1cm
\begin{center}
(Table 1) \ \ 
    \begin{tabular}{| l | l |}
    \hline \\[-2.5ex]

    root $\gamma$ & root space $W_\gamma$  \\ \hline \\[-2.5ex]
     $\pm 2w_i$ & $\alpha[ii]^{\pm} \ (　\ \alpha=\alpha^* \in \biga)$
     \\ \hline \\[-2.5ex]
     $\pm (w_i + w_j) $ & $d[ij]^\pm \ ( \ d \in \biga)$
     \\ \hline \\[-2.5ex]
     $ (w_i - w_j)$ & $ \{\delta(x.y) |x \in W_\mu, y \in W_\rho, \mu \ {\rm positive}, \rho \ {\rm negative}, \mu + \rho = w_i-w_j\}$
     \\ \hline
    \end{tabular}\\
\end{center}
In particular, this tells us that $V^\pm$ are direct sums of the root subspaces with respect to $\pm \{ \{2w_i\} \cup \{w_i + w_j\} \} $.

\vskip .2cm
When a Lie algebra has root space decomposition (with respect to a root system $\Phi$), one naturally expects its corresponding ``Chevalley group of adjoint type" (see \cite{Ca}), obtained from exponentiation, to obey $\Phi$-commutator relations. This is indeed the case for our ${\rm PE}(V)$, as suggested by

\begin{thm}\label{c3ln}\cite[Theorem 2]{lns}
For the Jordan pair $V$ defined above, $G={\rm PE}(V)$ has a $C_3$-grading with the following root subgroups:
\begin{fleqn}[0pt]
\begin{flalign*}
 \qquad G_\alpha =  \begin{cases}
 \exp_\pm(W_\alpha), \ {\text for} \ \alpha \in \pm \{\{2w_i\}\cup\{w_i+w_j\} \}, \\
 \la \bigcup  \{\beta(W_\mu,W_\rho) | \mu \ {\rm positive}, \rho \ {\rm negative}, \mu + \rho =\beta\} \ra, \ {\text for} \ \alpha \in  \{w_i-w_j\}
 \end{cases}
\end{flalign*}
\end{fleqn}
\end{thm}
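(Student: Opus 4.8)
The plan is to deduce the $C_3$-grading of $G={\rm PE}(V)$ from a $C_3$-grading of the Jordan pair $V$ itself, realized as the Peirce decomposition attached to the three diagonal idempotents. First I would observe that the pairs $e_1=([11],[11])$, $e_2=([22],[22])$, $e_3=([33],[33])$ form a complete orthogonal system of idempotents in $V=(H_3(\biga,*),H_3(\biga,*))$, and that the associated Peirce spaces reproduce Table 1 verbatim: the diagonal space $\alpha[ii]$ is the root space of $\pm 2w_i$, the off-diagonal space $d[ij]$ ($i\neq j$) is that of $\pm(w_i+w_j)$, and there are no further Peirce spaces, which is exactly why the grading is $C_3$ and not $BC_3$. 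The statement that $V^\sigma=\bigoplus_\gamma W_\gamma$ is a $C_3$-grading of the Jordan pair is then the pair of containments $Q_{W_\alpha}W_\beta\subseteq W_{2\alpha+\beta}$ and $\{W_\alpha,W_\beta,W_\gamma\}\subseteq W_{\alpha+\beta+\gamma}$ (a term being $0$ whenever its target index fails to be a root), and each of these is read straight off the quadratic and triple products tabulated in Remark \ref{multh3}. Indeed, the closing remark there, that a triple product of basis elements is nonzero only in the shape $\{a[ij],b[kl],c[pq]\}$ with $j=k$ and $p=q$, is precisely the statement that weights add under multiplication.

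Granting the graded Jordan pair, I would define the root groups as in the statement and verify the $C_3$-commutator relations type by type. For a root $\alpha$ of $V$-type, i.e.\ lying in $\pm\{2w_i\}\cup\pm\{w_i+w_j\}$, the set $G_\alpha=\exp_\pm(W_\alpha)$ is a subgroup because $\exp_\pm$ is additive on the abelian group $W_\alpha\subseteq V^\pm$. Moreover the $\mathcal{TKK}$-grading is concentrated in degrees $\{-1,0,1\}$, so $[V^+,V^+]=[V^-,V^-]=0$ and both $U^+$ and $U^-$ are abelian; this at once disposes of every relation between two positive, or two negative, $V$-type roots, since in $C_3$ the sum of two distinct positive $V$-type roots is never again a root, so the prescribed right-hand side $\prod_\gamma G_\gamma$ is the trivial group. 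For a difference root $w_i-w_j$ the group $G_{w_i-w_j}$ is, by definition, generated by the Bergmann automorphisms $\beta(x,y)=(B(x,y),B(y,x)^{-1})$ with $x\in W_\mu$, $y\in W_\rho$ and $\mu+\rho=w_i-w_j$; because the operators $D(x,y)$ and $Q_xQ_y$ shift weights by the nonzero amounts $w_i-w_j$ and $2(w_i-w_j)$ they are nilpotent, so $B(x,y)$ is unipotent and $\beta(x,y)$ is the group-level incarnation of the degree-$0$ root space $W_{w_i-w_j}\subseteq\mathfrak{L}_0$ spanned by the inner derivations $\delta(x,y)$.

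The heart of the argument, and the step I expect to be the main obstacle, is the mixed commutator $[\exp_+(x_\alpha),\exp_-(y_\beta)]$ with $\alpha$ positive and $\beta$ negative of $V$-type and $\beta\neq-\lambda\alpha$, the opposite-root pairs being excluded from the grading axiom since they instead generate the $\mathfrak{sl}_2$-triples of the long roots. Expanding this bracket in $\Aut(\mathcal{TKK}(V))$ by means of the explicit action of $\exp_\pm$, the $\mathcal{TKK}$ bracket relations $[D,z]=D_\sigma(z)$ and $[x,y]=-\delta(x,y)$, and the quadratic identity $2Q_xy=[x,[x,y]]$, one finds that it factors as a Bergmann term $\beta(x_\alpha,y_\beta)\in G_{w_i-w_j}$ together with exponential terms; reading off weights then identifies the exponential factors as lying in $G_\gamma$ for $\gamma$ in the positive cone $\dbZ_{>0}\alpha+\dbZ_{>0}\beta$. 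For instance $[G_{w_i+w_j},G_{-2w_j}]$ should be contained in $G_{w_i-w_j}\,G_{2w_i}$, and checking that the roots so produced are exactly those of $\Phi\cap(\dbZ_{>0}\alpha+\dbZ_{>0}\beta)$ is the delicate bookkeeping; the organizing tool is the quasi-inverse product formula $\exp_+(x)\exp_-(y)=\exp_-(y')\,\beta(x,y)\,\exp_+(x')$, valid whenever $B(y,x)$ is invertible, which turns a product of a positive and a negative exponential into its $U^-\mathfrak{L}_0 U^+$ factorization. Finally, the relations pitting a difference root $w_i-w_j$ against a $V$-type root follow by conjugating the exponential root groups by Bergmann operators, using that each $\beta(x,y)$ acts on $V^\pm$ as a graded automorphism permuting the $W_\gamma$ according to the relevant Weyl reflection, which reduces these cases once more to the tabulated Jordan products. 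Throughout, the cleanest scaffolding is the Loos--Neher principle that a root grading of a Jordan pair induces one on its projective elementary group, and the computations above are the concrete specialization of that machinery to $V=H_3(\biga,*)$.
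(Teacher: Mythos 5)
You should first note what the paper itself does with this statement: it does not prove it at all, but quotes it from Loos--Neher \cite[Theorem 2]{lns}, and only afterwards extracts explicit commutator relations for its own use. Measured against that cited source, your outline reconstructs essentially the right machinery, and faithfully: the Peirce decomposition relative to the complete orthogonal idempotents $([ii],[ii])$ does give a root grading of the Jordan pair of type $C_3$ (the absence of further Peirce spaces is indeed what rules out $BC_3$), the containments $Q_{W_\alpha}W_\beta\subseteq W_{2\alpha+\beta}$ and $\{W_\alpha,W_\beta,W_\gamma\}\subseteq W_{\alpha+\beta+\gamma}$ are read off the multiplication table of Remark \ref{multh3}, the $V$-type root groups are $\exp_\pm(W_\alpha)$ with the positive-positive relations trivial because $U^\pm$ are abelian and no sum of two distinct positive $V$-type roots is a root, and the mixed commutators are handled by the quasi-inverse factorization $\exp_+(x)\exp_-(y)=\exp_-(y')\,\beta(x,y)\,\exp_+(x')$ together with weight bookkeeping. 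Your observation that invertibility of $B(x,y)$ comes for free from nilpotence of the weight-shifting operators $D(x,y)$ and $Q_xQ_y$ is a legitimate shortcut; the paper instead proves this invertibility by hand in the Proposition and Claim following the theorem, producing the explicit inverse $\mathrm{Id}-\mathrm{ad}_x\mathrm{ad}_y+Q_xQ_y$, which it needs anyway for the coordinatization.

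One step of your argument, however, is false as written: for these nilpotent weight pairs, $\beta(x,y)$ is \emph{not} a graded automorphism permuting the $W_\gamma$ by a Weyl reflection. It is unipotent: on $V^+$ it acts by $B(x,y)=\mathrm{Id}-D(x,y)+Q_xQ_y$, which maps $W_\gamma$ into $W_\gamma\oplus W_{\gamma+(w_i-w_j)}\oplus W_{\gamma+2(w_i-w_j)}$; genuine Weyl-type permutation of root spaces is induced only by elements such as $w_e=\exp_+(e^+)\exp_-(e^-)\exp_+(e^+)$ attached to invertible or idempotent pairs, which your $\beta(x,y)$ are not. Had you used the reflection claim literally, the conjugate $\beta(x,y)\exp_+(z_\gamma)\beta(x,y)^{-1}=\exp_+(B(x,y)z_\gamma)$ would land in a single root group $G_{\sigma\gamma}$, and the commutator $\exp_+(Bz)\exp_+(-z)$ would retain a forbidden $G_\gamma$ factor, contradicting relations the paper computes explicitly, e.g.\ $[G_{w_i-w_j}(r),G_{w_i+w_j}(s)]=G_{2w_i}(-rs^*-sr^*)$. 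The correct statement --- $B(x,y)z-z\in\bigoplus_{k\geq 1}W_{\gamma+k(w_i-w_j)}$, combined with additivity of $\exp_\pm$ --- is exactly what your ``tabulated Jordan products'' sentence would produce, so the fix is local. Separately, your type-by-type verification omits one case entirely: commutators of two difference-root groups, $[G_{w_i-w_j},G_{w_j-w_k}]\subseteq G_{w_i-w_k}$, do not follow from the cases you list and must be reduced to the mixed cases, for instance via the factorization of $\beta(x,y)$ as a product of elements of $U^+$ and $U^-$ coming from the same quasi-inverse formula. This is a small but genuine gap in an otherwise sound reconstruction of the cited proof.
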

Although the theorem does reveal all the root subgroups, it alone is not sufficient for our purpose, requiring us to compute some commutator relations for $\text{PE}(V)$.

We first present some useful notations from \cite{ln}. For $(x,y) \in V$, since $\exp_+(x)$ and $\exp_-(y)$ are automorphisms of $\fkg=\mathcal{TKK}(V)= V^+ \oplus \mathfrak{L}_0(V) \oplus V^-$, we can adopt the following matrix notation :
$$
\exp_+(x)=\left(
\begin{array}{ccc}
1 & ad_x & Q_x\\
0 & 1 & ad_x\\
0 & 0 & 1\\
\end{array}
\right)
, \quad
\exp_-(y)=\left(
\begin{array}{ccc}
1 & 0 & 0\\
ad_y & 1 & 0\\
Q_y & ad_y & 1\\
\end{array}
\right)
$$.

In the same spirit, if $h=(h^+, h^-)$ is an automorphism of $V$, then using the matrix notation one can write the induced automorphism on $\fkg$ as:
\[\left(
\begin{array}{ccc}
h^+ & 0 & 0\\
0 & h_0 & 0\\
0 & 0 & h^-\\
\end{array}
\right)
\]
where $h_0(-\delta(x,y)) = h_0([x,y]) = [h_+(x), h_-(y)]$.

\vskip .3cm
The goal of obtaining a reasonable form of commutator relations require us to ``coordinatize'' each root subgroup. This is easy for $\pm \{\{2w_i\}\cup\{w_i+w_j\} \}$ as they have canonical coordinates coming from the matrix structure of $\mathcal{H}_3(\biga, \ast)$. In the succeeding paragraphs we resolve the problem for the remaining roots.

\begin{prop}
Let $x \in W_\gamma \subset V^+ ,\ y \in W_\tau \subset V^- , \gamma \neq - \tau , \gamma + \tau \in C_3$, then the Bergmann operators $B(x,y)$ and $B(y,x)$ are invertible.

\end{prop}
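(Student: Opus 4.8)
The plan is to exploit the $C_3$-root grading of $\fkg=\mathcal{TKK}(V)$ recorded in Table 1, which restricts to root-space decompositions $V^+=\bigoplus_\rho W_\rho$, the sum being over the positive roots $\rho\in\{2w_i\}\cup\{w_i+w_j\}$, and likewise for $V^-$. The governing observation is that each of the three summands of $B(x,y)=\mathrm{Id}_{V^+}-D(x,y)+Q_xQ_y$ is homogeneous with respect to this grading. Write $\lambda:=\gamma+\tau$. Since the bracket on $\fkg$ respects the grading and the Jordan operations are built from iterated brackets (recall $D(x,y)z=[[x,y],z]$ up to sign and $2Q_xy=[x,[x,y]]$ from \eqref{quadandbracket}), the operator $D(x,y)$ sends $W_\rho$ into $W_{\rho+\lambda}$, while $Q_xQ_y=Q_x\circ Q_y$ sends $W_\rho$ through $V^-$ into $W_{\rho+2\lambda}$, as $Q_x$ shifts the grading by $2\gamma$ and $Q_y$ by $2\tau$. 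The hypothesis $\gamma\neq-\tau$ guarantees $\lambda\neq 0$.

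First I would fix the linear functional $f(\cdot)=\langle\,\cdot\,,\lambda\rangle$ on the root lattice, using the standard inner product in which the $w_i$ are orthonormal. Along the shift by $\lambda$ one has $f(\rho+\lambda)=f(\rho)+\norm{\lambda}^2$ with $\norm{\lambda}^2>0$, so both $D(x,y)$ and $Q_xQ_y$ \emph{strictly} raise the $f$-grading, the former by $\norm{\lambda}^2$ and the latter by $2\norm{\lambda}^2$. Because $V^+$ is the sum of only finitely many root spaces $W_\rho$, the functional $f$ takes finitely many values on the grading; hence the correction operator $N:=-D(x,y)+Q_xQ_y$ carries each graded piece into strictly higher ones and is therefore nilpotent. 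Consequently $B(x,y)=\mathrm{Id}_{V^+}+N$ is invertible, with inverse the (finite) Neumann sum $\sum_{k\geq 0}(-N)^k$. The identical argument applies to $B(y,x)=\mathrm{Id}_{V^-}-D(y,x)+Q_yQ_x$ on $V^-$: here $D(y,x)$ shifts by $\tau+\gamma=\lambda$ and $Q_yQ_x$ by $2\lambda$, so the same functional $f$ exhibits the correction term as nilpotent and yields invertibility of $B(y,x)$.

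The step I expect to require the most care is justifying the homogeneity claims rigorously, that is, that $D(x,y)$ and $Q_xQ_y$ are genuinely graded of degrees $\lambda$ and $2\lambda$. This rests on the root decomposition of $\mathcal{TKK}(V)$ being compatible with the Jordan pair operations: concretely, that $\fkg$ is a $C_3$-graded Lie algebra (Table 1, cf.\ \cite{Jac}), together with the expressions of $Q$ and $D$ through the grading-preserving bracket noted above. Once homogeneity is in hand, the nilpotency and invertibility are formal.

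Finally I would remark that the hypothesis $\gamma+\tau\in C_3$ is, strictly speaking, not needed for invertibility beyond $\lambda\neq 0$: a short enumeration shows that, under $\gamma\neq-\tau$ and $\gamma+\tau\in C_3$, the sum is forced to be one of the roots $w_i-w_j$, whereas when $\gamma+\tau\notin C_3\cup\{0\}$ every shifted space $W_{\rho+\lambda}$ vanishes and $B$ reduces to the identity, so the conclusion persists in any case. The stated hypothesis simply isolates the cases actually used to coordinatize the root subgroups $G_{w_i-w_j}$.
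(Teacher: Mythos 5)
Your argument is correct, and it takes a genuinely different route from the paper's. The paper proves invertibility \emph{constructively}: after reducing to basis elements $x,y$ (citing \cite[p.~87]{ln}), it establishes the operator identities $\ad_x\ad_y=-D(x,y)$, the commutation of $\ad_x\ad_y$ with $Q_xQ_y$, and $(Q_xQ_y)^2=0$, and then verifies by direct computation --- using identity \eqref{id for Dxysquared} and a case-by-case check of $\{x,Q_y x,a\}=0$ against the multiplication table of Remark \ref{multh3} --- that $B(x,y)^{-1}=\mathrm{Id}-\ad_x\ad_y+Q_xQ_y$, i.e.\ $B(x,y)^{-1}=B(-x,y)=B(x,-y)$. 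You instead exhibit the correction term $N=-D(x,y)+Q_xQ_y$ as strictly raising the functional $\langle\,\cdot\,,\gamma+\tau\rangle$ on the finite root grading of $V^+$, hence nilpotent, so that $B=\mathrm{Id}+N$ is invertible via a finite Neumann sum. Your route is shorter, requires no reduction to basis elements and no case analysis, and (as your closing remark observes) works for any root-graded Jordan pair whenever $\lambda=\gamma+\tau\neq 0$. What it does not deliver is the closed formula $B(x,y)^{-1}=B(-x,y)=B(x,-y)$, which the paper relies on afterwards: the ensuing Corollary, the Bergmann identities of Lemma \ref{calcberg}, and the additivity $G_\gamma(s)G_\gamma(r)=G_\gamma(s+r)$ used to coordinatize the subgroups $G_{w_i-w_j}$ all lean on the explicit inverse. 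So your proof settles the proposition as stated, but in the paper's development the explicit computation is not dispensable.

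Two small repairs. First, the homogeneity of $Q_x$ should not be inferred from $2Q_xy=[x,[x,y]]$ (equation \eqref{quadandbracket}) alone: over a ring with $2$-torsion this identity only locates $2Q_xy$, not $Q_xy$. Either invoke the root grading of the Jordan pair $V$ itself, $Q_{W_\gamma}W_\tau\subseteq W_{2\gamma+\tau}$, which is visible directly in the table of Remark \ref{multh3} (cf.\ \cite{ln}), or note that the paper's own shortcut carries the same caveat --- its quick proof of $(Q_xQ_y)^2=0$ explicitly assumes $\frac{1}{2}\in\biga$. Second, in your final remark the justification is off: when $\lambda\notin C_3\cup\{0\}$ it is \emph{not} true that every $W_{\rho+\lambda}$ vanishes (take $\rho=2w_2$ and $\lambda=2w_1-2w_2$, so $\rho+\lambda=2w_1$); what vanishes is the degree-$\lambda$ component of $\mathfrak{L}_0$, hence $[x,y]$ and with it $D(x,y)$, while $Q_xQ_y$ dies for degree reasons (for an integral $\lambda\neq 0$, $\rho$ and $\rho+2\lambda$ can both lie in $\{2w_i\}\cup\{w_i+w_j\}$ only if $\lambda$ is of the form $w_i-w_j$). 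The conclusion of the remark survives, since your nilpotency argument only ever used $\lambda\neq 0$.
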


\begin{proof}
By \cite[Pg 87]{ln},it suffices to prove invertibility for $B(x,y)$ where $x,y$ are basis elements of $H_3(\biga, \ast)^+$ and $H_3(\biga, \ast)^-$, respectively. We shall need the following
\renewcommand{\qedsymbol}{}
\end{proof}

\begin{lem}
As operators on $V^+$, the following holds:
\begin{enumerate}
\item $ ad_x ad_y = -D(x,y)$
\item $ ad_x ad_y Q_x Q_y = Q_x Q_y ad_x ad_y$
\item  $ (Q_xQ_y)^2=0$
 
\end{enumerate}
\end{lem}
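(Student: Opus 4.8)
The plan is to prove all three identities inside the TKK algebra $\fkg=\mathcal{TKK}(V)=V^+\oplus\mathfrak{L}_0\oplus V^-$, exploiting the $C_3$-weight grading recorded in Table 1. Throughout I take $x\in W_\gamma\subseteq V^+$ and $y\in W_\tau\subseteq V^-$ to be the homogeneous weight vectors supplied by the Proposition, with $\gamma$ a positive root in $\{2w_i\}\cup\{w_i+w_j\}$, $\tau$ a negative root in $-(\{2w_i\}\cup\{w_i+w_j\})$, and $\gamma+\tau=w_i-w_j\neq 0$. Identity (1) is purely formal and needs no weight hypothesis, whereas (2) and (3) are degree counts that rely on $x,y$ being homogeneous.

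For (1) I would compute $ad_x\,ad_y$ on a vector $z\in V^+$ directly from the bracket rules of $\mathcal{TKK}(V)$. Since $[z,y]=-\delta(z,y)$ for $(z,y)\in V^+\times V^-$, antisymmetry gives $ad_y(z)=[y,z]=\delta(z,y)\in\mathfrak{L}_0$; then, using $[\delta(z,y),x]=D(z,y)x$, one obtains $ad_x\,ad_y(z)=[x,\delta(z,y)]=-D(z,y)x=-\{z,y,x\}$. The identity then follows from the symmetry of the Jordan triple product in its outer arguments, $\{z,y,x\}=\{x,y,z\}=D(x,y)z$, which is immediate from the defining polarization $D(x,y)z=Q_{x+z}y-Q_x y-Q_z y$ (manifestly symmetric in $x$ and $z$). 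Hence $ad_x\,ad_y=-D(x,y)$ on $V^+$.

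For (2) and (3) the key observation is that the four basic operators are strictly homogeneous for the $(w_i-w_j)$-grading on $V^+$: reading off the matrices for $\exp_\pm$, the superdiagonal entries $ad_x,ad_y$ raise weight by $\gamma,\tau$ and the corner entries $Q_x,Q_y$ raise weight by $2\gamma,2\tau$. Thus $ad_x\,ad_y$ (equivalently $-D(x,y)$, by (1)) raises weight by $\gamma+\tau=w_i-w_j$, while $Q_xQ_y$ raises weight by $2(w_i-w_j)$. Consequently both $ad_x\,ad_y\,Q_xQ_y$ and $Q_xQ_y\,ad_x\,ad_y$ raise weight by $3(w_i-w_j)$, and $(Q_xQ_y)^2$ raises weight by $4(w_i-w_j)$. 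I would finish with a finite check against Table 1: the six weights of $V^+$ have coordinates in $\{0,1,2\}$ (patterns $(2,0,0)$ and $(1,1,0)$), so no two of them differ by $3(w_i-w_j)$ or $4(w_i-w_j)$, whose $i$-coordinate alone would be $3$ or $4$. Hence all three composite operators annihilate every weight space and vanish on $V^+$; in particular the two operators in (2) are equal (both zero), and $(Q_xQ_y)^2=0$.

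The computational content is light; the work is in the bookkeeping. The main obstacle I anticipate is making the weight argument airtight. I must state explicitly, and then use, the two load-bearing inputs: that $V^+$ is exactly the direct sum of those six weight spaces (so that a homogeneous operator with no available target weight is genuinely zero), and that $ad_x,ad_y,Q_x,Q_y$ are strictly homogeneous of the asserted weights rather than weight-raising with some spread. Both facts come from Table 1 and the $\exp$-matrix description, but they are precisely what the degree count rests on, so I would isolate them before running it. A secondary point to get exactly right is the sign in (1), since the identification $-D(x,y)=ad_x\,ad_y$ is what lets one rewrite the Bergmann operator $B(x,y)=\mathrm{Id}-D(x,y)+Q_xQ_y$ as a nilpotent perturbation of the identity in the subsequent invertibility argument.
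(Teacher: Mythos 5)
Your proof of (1) is correct and amounts to the same one-line computation as the paper's, which applies the Jacobi identity together with $[V^+,V^+]=0$; unwinding through $\ad_y(z)=\delta(z,y)$ and the outer symmetry of the triple product is an equivalent presentation. For (2) and (3) you take a genuinely different route. The paper proves (2) as an unconditional operator identity, valid for arbitrary $x,y$ in any Jordan pair, by combining (1) with the first defining axiom $D(x,y)Q_x=Q_xD(y,x)$ applied in both components, so that $D(x,y)Q_xQ_y=Q_xD(y,x)Q_y=Q_xQ_yD(x,y)$; and it proves (3) by a partial weight count (one of $Q_xy$, $Q_yx$ lands outside the root system, hence vanishes) followed by the fundamental identity $Q_{Q_xy}=Q_xQ_yQ_x$. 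Your uniform degree count is arithmetically correct: the composites have degrees $3(w_i-w_j)$ and $4(w_i-w_j)$, while the six weights of $V^+$ have coordinates in $\{0,1,2\}$, so no two differ by more than $2$ in any coordinate, and all three composite operators annihilate every weight space. Where it applies, this is cleaner than the paper's treatment of (3) — it dispenses with the fundamental identity and shows both sides of (2) are actually zero — but it is also strictly weaker: it establishes (2) only for homogeneous $x,y$ with $\gamma+\tau$ a root (enough for the subsequent Claim on $B(x,y)^{-1}$), whereas the paper's (2) is characteristic-free and hypothesis-free.

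The genuine gap is your homogeneity input for $Q_x$ and $Q_y$. Strict homogeneity of $\ad_x,\ad_y$ does follow from the Lie-algebra grading of $\mathcal{TKK}(V)$, but the claim that $Q_x$ shifts weight by exactly $2\gamma$ cannot be ``read off the matrices for $\exp_\pm$'': the matrix merely records that $Q_x$ is the corner entry and says nothing about its interaction with the grading. The only structural bridge is equation (\ref{quadandbracket}), $2Q_xy=[x,[x,y]]$, which locates $2Q_xy$ in $W_{2\gamma+\tau}$ but pins down the components of $Q_xy$ outside $W_{2\gamma+\tau}$ only up to $2$-torsion; since $\biga$ is an arbitrary finitely generated alternative ring, $\frac{1}{2}$ need not exist. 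This is precisely the caveat the paper itself flags: its weight-based shortcut for (3) is offered only when $\frac{1}{2}\in\mathcal{A}$, the general case being delegated to a direct computation with the multiplication table of Remark \ref{multh3}. Your argument silently makes the same assumption, and for all three degree counts at once. The repair is cheap: verify from Remark \ref{multh3} that each quadratic product of basis elements lands in a single coordinate space (for instance $U_{d[ij]}b[ij]=db^*d[ij]$ and $U_{d[ij]}\beta[jj]=d\beta d^*[ii]$, with $U_{a[ij]}b[kl]=0$ whenever $\{k,l\}\neq\{i,j\}$), which gives $Q_xW_\tau\subseteq W_{2\gamma+\tau}$ on the nose, without $\frac{1}{2}$. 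With that check inserted before the degree count, your proof is complete in all characteristics; without it, it covers only the case $\frac{1}{2}\in\biga$.
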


\begin{proof}

Since for $z \in V^+$, $\ad_x\ad_y z=[x,[y,z]]=[[x,y],z]]+\cancel{[y,[x,z]]}$, (1) is proved.

Using the result of (1) and symmetry, we see that (2) is equivalent to the first of the defining identities of Jordan pairs, showing its validity.

The general proof of (3) involves a lengthy calculation using the data in Remark \ref{multh3}, but we can give a shorter proof when $\frac{1}{2} \in \mathcal{A}$ .

Recall that for a Jordan Pair $V$ and the corresponding Lie algebra $\mathcal{TKK}(V)$, we can relate the bracket and the quadratic operator by equation (\ref{quadandbracket}):

\[2Q_x( y)=\{xyx\}=[x,[x,y]]\]
for $ x \in
V^\sigma , y \in V^{-\sigma}$.  This tells us that in our case if $ x \in W_\gamma $ , $y \in W_\tau$, then $Q_x(y) \in W_{2\gamma+\tau} $, $Q_y(x) \in W_{2\tau+\gamma} $. Specifying $\gamma$ and $\tau$ to be in $\pm\{\{ w_i +w_j\} \cup \{ 2w_i\}\} \subset C_3$, it is clear that one of the two expressions $2\gamma+\tau$, $\gamma+2\tau$ is not a root, implying either $Q_x(y)$ or $Q_y(x)$ is zero. Now (3) follows from the third defining identity of Jordan pairs. \end{proof} 

And one general identity that holds for any Jordan pair:
\begin{equation}\label{id for Dxysquared}
\{\{xyu\}, y, z\} = \{x, Q_y u, z\} + \{u, Q_y x, z\}.
\end{equation}, which is a direct consequence of identity ({\ref{1.2}})
\begin{claim}
Under the above assumptions, $B(x,y)^{-1}=\text{Id}- ad_x ad_y + Q_xQ_y$
\end{claim}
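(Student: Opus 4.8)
The plan is to verify directly that the proposed operator is a two-sided inverse of $B(x,y)$, which proves invertibility and identifies the inverse in one stroke. Writing $N := ad_x ad_y$ and $M := Q_xQ_y$ as endomorphisms of $V^+$, part (1) of the Lemma gives $N = -D(x,y)$, so that $B(x,y) = \mathrm{Id} - D(x,y) + Q_xQ_y = \mathrm{Id} + N + M$, while the asserted inverse is $\mathrm{Id} - N + M$. First I would expand $(\mathrm{Id} + N + M)(\mathrm{Id} - N + M) = \mathrm{Id} + 2M - N^2 + (NM - MN) + M^2$. Part (2) of the Lemma gives $NM = MN$ and part (3) gives $M^2 = 0$, so the product collapses to $\mathrm{Id} + 2M - N^2$; the product taken in the opposite order reduces identically by the same two facts. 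Hence the entire Claim comes down to the single operator identity $N^2 = 2M$, that is, $D(x,y)^2 = 2Q_xQ_y$.

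To handle $N^2 = 2M$ I would first invoke the Jordan-pair identity $D(x,y)^2 = D(Q_xy,y) + 2Q_xQ_y$. This may be cited as a standard linearization of the defining identities, or derived inside $\fkg = \mathcal{TKK}(V)$: expanding $N^2 = ad_x ad_y ad_x ad_y$ by means of $[ad_x, ad_y] = ad_{[x,y]}$ and the relations $ad_x^2 = 2Q_x$, $ad_y^2 = 2Q_y$ coming from (\ref{quadandbracket}), together with the part (1)-type relation $ad_{Q_xy}\,ad_y = -D(Q_xy,y)$, produces exactly this formula with the factor $2$ bookkept correctly. It then suffices to show that the correction term $D(Q_xy,y)$ vanishes as an operator on $V^+$.

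This vanishing is where the root hypotheses enter, and I expect it to be the crux of the argument. The defining identity (\ref{1.2}) of a Jordan pair reads $D(Q_xy,y) = D(x,Q_yx)$ (equivalently, this is (\ref{id for Dxysquared}) specialized to $u=x$, using $\{xyx\}=2Q_xy$). On the other hand, the degree count already carried out in the proof of the Lemma shows that, for $x \in W_\gamma$ and $y \in W_\tau$ with $\gamma,\tau \in \pm\{\{2w_i\}\cup\{w_i+w_j\}\}$ and $\gamma \neq -\tau$, at least one of $2\gamma+\tau$, $\gamma+2\tau$ fails to be a root, so at least one of $Q_xy \in W_{2\gamma+\tau}$, $Q_yx \in W_{\gamma+2\tau}$ is zero. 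If $Q_xy = 0$ the left-hand side of (\ref{1.2}) vanishes; if $Q_yx = 0$ the right-hand side vanishes; since (\ref{1.2}) equates the two, $D(Q_xy,y) = 0$ in either case. Therefore $N^2 = D(x,y)^2 = 2Q_xQ_y = 2M$, and substituting back gives $(\mathrm{Id}+N+M)(\mathrm{Id}-N+M) = \mathrm{Id} + 2M - 2M = \mathrm{Id}$, which proves the Claim and thereby finishes the proof of the Proposition.

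The principal obstacle is the auxiliary identity $D(x,y)^2 = D(Q_xy,y) + 2Q_xQ_y$ and, in particular, tracking the factor $2$: it originates in the quadratic normalization $\{xyx\} = 2Q_xy$ of (\ref{quadandbracket}) and is precisely what lets $N^2$ cancel the $2M$ term. Once this identity and parts (1)--(3) of the Lemma are in place, the remainder is the routine expansion above combined with the clean application of (\ref{1.2}).
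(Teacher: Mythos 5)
Your reduction is sound and, in substance, coincides with the paper's own computation: writing $B(x,y)=\mathrm{Id}+N+M$ via part (1) of the Lemma, expanding, and cancelling the cross terms and $M^2$ by parts (2) and (3) is exactly the paper's first display (checking both orders, as you do, is a slight improvement), and the paper's subsequent manipulation with identity (\ref{id for Dxysquared}) is nothing but a derivation of your key identity $D(x,y)^2 = D(Q_xy,y)+2Q_xQ_y$, in the equivalent form $\bigl(-(\ad_x\ad_y)^2+2Q_xQ_y\bigr)a = -\{x,Q_yx,a\}$, the two correction terms agreeing by (\ref{1.2}). Where you genuinely diverge is the final vanishing step. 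The paper kills $\{x,Q_yx,a\}$ by checking the homogeneous configurations against the multiplication table of Remark \ref{multh3} --- note that in its case $x=c[ii]$, $y=b[ij]$ the element $Q_yx=b^*cb[jj]$ is generally nonzero, and the triple product dies only for index-support reasons; your observation that (\ref{1.2}) transfers vanishing between $Q_xy$ and $Q_yx$ handles that case more cleanly, since there $Q_xy=0$.

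However, your justification that one of $Q_xy$, $Q_yx$ vanishes has a real gap: the ``degree count already carried out in the proof of the Lemma'' is precisely the shortcut the paper flags as valid only when $\frac{1}{2}\in\mathcal{A}$. That count rests on (\ref{quadandbracket}), $2Q_xy=[x,[x,y]]$, which places $2Q_xy$ --- not $Q_xy$ --- in $W_{2\gamma+\tau}$; if $\mathcal{A}$ has $2$-torsion (characteristic $2$ is squarely within the scope of Theorem \ref{mainresult}) one cannot conclude $Q_xy=0$ from $2\gamma+\tau\notin C_3$. The same caveat afflicts your proposed TKK derivation of $D(x,y)^2=D(Q_xy,y)+2Q_xQ_y$ via $\ad_x^2=2Q_x$; it is safer to take this as the standard Jordan-pair identity, obtained from (\ref{id for Dxysquared}) and outer symmetry of the triple product with no division by $2$. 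The repair is cheap and characteristic-free: the Proposition has already reduced matters to homogeneous basis-type $x,y$, and the support rule $U_{a[ij]}b[kl]=0$ for $\{k,l\}\neq\{i,j\}$ in Remark \ref{multh3} gives directly $U_{\alpha[ii]}b[ij]=0$, $U_{\beta[jj]}c[ij]=0$, and $U_{b[jk]}c[ij]=0$ for $k\neq i$, so in each admissible configuration one of $Q_xy$, $Q_yx$ vanishes on the nose; with that substituted for the degree count, your application of (\ref{1.2}) closes the argument over an arbitrary alternative ring with nuclear involution.
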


\begin{proof}
Using the lemma above for all admissible $x,y$'s, we have:
\begin{fleqn}[2pt]
\begin{align*}
& \ B(x,y) (\text{Id}- ad_x ad_y + Q_xQ_y)\\
=& \ (\text{Id}+ ad_x ad_y + Q_xQ_y)(\text{Id}- ad_x ad_y + Q_xQ_y)\\
=& \ \text{Id} - (ad_x ad_y)^2  \cancel{+ad_x ad_y Q_x Q_y} \cancel{-Q_x Q_y ad_x ad_y } + {(Q_xQ_y)^2} + 2Q_xQ_y\\
=& \ \text{Id} - (ad_x ad_y)^2 +4Q_xQ_y -2Q_xQ_y\\
&\text{Since}\\
&(- (ad_x ad_y)^2 +4Q_xQ_y -2Q_xQ_y)a \\
=&- \{\{ayx\},y,x\} +\{x,Q_y a, x\} = - \{x,Q_y a, x\} -\{x,Q_y x , a\} +\{x,Q_y a, x\} =-\{x, Q_y x, a\}
\end{align*}
\end{fleqn}
(Note the usage of identity (\ref{id for Dxysquared})), it would suffice to prove  $\{x, Q_y x, a\}=0$ for all $a \in V^+$. This essentially breaks down to 2 cases:
\begin{enumerate}
\item $x=c[ii], y=b[ij]$. Here $Q_y(x)=b^*cb[jj]$, so $\{x, Q_y x, a\}=-\{a[ii], b^*cb[jj], a\}=0$.
\vskip .2cm
\item $x=c[ij], y=b[jj]$. Then $Q_y(x)=0$, so $\{x, Q_y x, a\}=0$.
\end{enumerate}

Thus the claim is verified. The same argument works for $B(y,x)$ by symmetry.
\end{proof}

\begin{cor}
$B(x,y)^{-1} = B(-x,y) = B(x,-y)$.
\begin{proof}
This follows directly from the claim above.
\end{proof}
\end{cor}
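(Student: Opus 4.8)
The plan is to obtain both identities by a direct substitution into the definition of the Bergmann operator, reading off the result from the explicit inverse formula just established in the Claim. Recall that $B(x,y)=\text{Id}-D(x,y)+Q_xQ_y$, and that the Claim gives $B(x,y)^{-1}=\text{Id}-\ad_x\ad_y+Q_xQ_y$; by part (1) of the Lemma ($\ad_x\ad_y=-D(x,y)$) this may be rewritten as $\text{Id}+D(x,y)+Q_xQ_y$.

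The only structural inputs needed are the behaviour of $D$ and $Q$ under negation of arguments: since $D$ is bilinear one has $D(-x,y)=D(x,-y)=-D(x,y)$, while since $Q_\sigma$ is a quadratic map one has $Q_{-x}=Q_x$ and $Q_{-y}=Q_y$. Substituting $-x$ for $x$ into the definition then yields $B(-x,y)=\text{Id}-D(-x,y)+Q_{-x}Q_y=\text{Id}+D(x,y)+Q_xQ_y$, which is exactly the formula for $B(x,y)^{-1}$ above; the computation for $B(x,-y)$ is identical, flipping the sign in the second slot instead. This gives both asserted equalities simultaneously.

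Before concluding I would note that the Claim's inverse formula was derived under the standing hypotheses on $x$ and $y$ (basis elements lying in root spaces $W_\gamma$, $W_\tau$ with $\gamma\neq-\tau$ and $\gamma+\tau\in C_3$). Since each $W_\gamma$ is a linear subspace, it is closed under negation, so $-x\in W_\gamma$ and $-y\in W_\tau$ and the hypotheses transfer verbatim to the pairs $(-x,y)$ and $(x,-y)$; one could even invoke the Claim directly for those pairs rather than resubstituting. There is no genuine obstacle here — the statement is a one-line consequence of the Claim — and the only point requiring the slightest care is keeping straight that negating an argument flips the sign of the bilinear term $D$ but leaves the quadratic term $Q$ unchanged.
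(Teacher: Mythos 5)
Your proof is correct and is precisely the argument the paper intends by ``this follows directly from the claim above'': substituting $-x$ (or $-y$) into $B(x,y)=\mathrm{Id}-D(x,y)+Q_xQ_y$, using bilinearity of $D$ and the quadratic nature of $Q$, recovers exactly the inverse formula $\mathrm{Id}+D(x,y)+Q_xQ_y$ from the Claim. Your added observation that the root spaces $W_\gamma$, $W_\tau$ are closed under negation, so the Claim's hypotheses apply to $(-x,y)$ and $(x,-y)$ as well, is a worthwhile detail the paper leaves implicit.
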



\begin{lem}\label{calcberg}
In $\mathcal{H}_3(\biga, \ast)$, we have the following identities for $  1 \leq i \neq j  \neq k\leq 3$ involving Bergmann operators:
\begin{itemize}
\item[\{$\iota$\}]$B(\alpha[ii],a[ij])=B([ii],\alpha a[ij])$

\item[\{I\}]$B(a[ij],b[jk])=B([ij],ab[jk]) =B(ab[ij],[jk])$

\item[\{$\bullet$\}]$B(a[ij],\alpha[jj])=B(a \alpha[ij],[jj])$

\item[\{$\iota \iota$\}] $B(\alpha[ii],a[ij])B(\beta[ii],b[ij]) = B([ii],(\alpha a+ \beta b)[ij])$

\item[\{II\}] $B(a[ij],b[jk])B(c[ij],d[jk]) = B([ij],(ab+cd)[jk]) = B((ab+cd)[ij],[jk])$

\item[\{$\bullet \bullet$\}]$B(a[ij],\alpha[jj])B(b[ij],b[jj]) = B((a \alpha+ b \beta )[ij],[jj])$
\end{itemize}
\begin{proof}
This again involves term by term checks against the multiplication table in Remark \ref{multh3}: The only basis elements that $B(\alpha[ii],a[ij])$ might not act on as identity are:
$b[ji] \mapsto b[ji] -(\alpha ab+ (\alpha ab)^*[ii]$;
$\gamma[jj] \mapsto  \gamma[jj] - \alpha a \gamma [ij] + (\alpha a) \gamma (\alpha a)^\ast[ii]$;
$c[jk] \mapsto c[jk] - \alpha (ac) [ik]$, which justifies $\{\iota\}$ \& $\{\iota \iota\}$;
 The only basis elements that $B(a[ij],b[jk])$ may not act on as identity are:
$\gamma[kk] \mapsto \gamma[kk]- \gamma (ab)^*[ki] + (ab)\gamma (ab)^\ast [ii]$;
$c[ki] \mapsto c[ki] - ((ab)c+c^*(ab)^*)[ii]$;
$d[kj] \mapsto d[kj] - d^* (ab)^* [ji]$, which is sufficient to show \{I\} \& \{II\}. Likewise for $\{\bullet\} \& \{\bullet \bullet \}$
\end{proof}
\end{lem}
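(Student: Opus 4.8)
The plan is to regard each Bergmann operator $B(x,y) = \mathrm{Id} - D(x,y) + Q_xQ_y$ as a single linear endomorphism of $V^+ = \mathcal{H}_3(\biga,\ast)$, so that proving an identity between two such operators reduces to checking that they agree on every basis element $\alpha[ii]$ (with $\alpha = \alpha^*$) and $d[ij]$ (with $d \in \biga$). Because in each identity the arguments $x,y$ lie in a single pair of opposite root spaces attached to fixed indices, I expect almost all of these checks to be vacuous, and the whole lemma to come down to a handful of explicit products.

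First I would use the root grading of Table~1 to locate the few basis elements on which $B(x,y)$ can fail to act as the identity. If $x \in W_\gamma$ and $y \in W_\tau$, then $D(x,y)$ raises the root by $\gamma+\tau$ and $Q_xQ_y$ raises it by $2(\gamma+\tau)$; combined with the selection rule at the end of Remark~\ref{multh3} --- that $\{a[ij],b[kl],c[pq]\}$ survives only when $j=k$ and $p=q$ --- this pins the nontrivially-moved basis vectors down to a short list. For $B(\alpha[ii],a[ij])$, for instance, only $b[ji]$, $\gamma[jj]$ and $c[jk]$ can move, and analogous short lists appear for the operators in $\{I\}$ and $\{\bullet\}$.

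Next, for the three ``single'' identities $\{\iota\}$, $\{I\}$, $\{\bullet\}$, I would substitute the quadratic and triple products of Remark~\ref{multh3} and read off the image of each surviving basis vector. The point to extract is that every such image depends on $(x,y)$ only through one composite datum --- $\alpha a$ in $\{\iota\}$, $ab$ in $\{I\}$, $a\alpha$ in $\{\bullet\}$ --- which is exactly why the two operators in each identity coincide. The only step where nonassociativity could obstruct this is the reassociation of triple products such as $\alpha(ac) = (\alpha a)c$; here the standing hypothesis that $\ast$ is a \emph{nuclear} involution places the symmetric factor in the nucleus and so legitimizes precisely these rearrangements (and symmetrically when the symmetric element appears on the right, as in $\{\bullet\}$).

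The genuine obstacle is the three ``composite'' identities $\{\iota\iota\}$, $\{II\}$, $\{\bullet\bullet\}$, each of which asserts that a \emph{product} of two Bergmann operators equals a single one. I would expand $B(x_1,y_1)B(x_2,y_2)$ on each surviving basis vector and track the cross terms arising from the $D\cdot D$, $D\cdot QQ$ and $QQ\cdot QQ$ contributions. The expectation --- supported by the root-degree bookkeeping already used above, where $(Q_xQ_y)^2 = 0$ and the mixed $D\cdot QQ$ terms cancel for root reasons --- is that the higher-order pieces either vanish or telescope, leaving an action that is additive in the composite data, namely $\alpha a + \beta b$, $ab + cd$ and $a\alpha + b\beta$ respectively. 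Checking that this additivity holds exactly, with every product correctly reassociated through nuclearity, is the delicate part; once it is verified, each composite identity follows by comparison with the single operator on its right-hand side, just as in the single-operator case.
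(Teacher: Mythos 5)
Your proposal is correct and follows essentially the same route as the paper: a term-by-term check against the multiplication table of Remark \ref{multh3}, using the root grading to cut the verification down to the short list of basis elements each Bergmann operator actually moves, observing that the resulting action depends only on the composite datum ($\alpha a$, $ab$, $a\alpha$) --- with nuclearity of symmetric elements licensing the reassociations --- and then composing these explicit actions to get the additive identities $\{\iota\iota\}$, $\{\mathrm{II}\}$, $\{\bullet\bullet\}$. The paper carries out exactly this computation (merely recording the explicit images and noting they suffice), so your plan, once the cross-term bookkeeping you describe is executed, is the paper's proof.
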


The computation above yields:
\begin{cor}
$B(\alpha[ii],s[ij]) = B(\alpha [ik],s[kj])$ and $  B(s[ik],\alpha[kj])= B(s[ij],\alpha[kj])$ holds for $\forall\  1 \leq i \neq j  \neq k\leq 3 $.
\end{cor}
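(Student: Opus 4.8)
The plan is to read both displayed equations as identities of $k$-linear endomorphisms of $V^+ = H_3(\biga,*)$. Since $V^+$ is spanned by the basis $\{\alpha[ii],\, d[ij],\, [ij]\}$ of Remark \ref{multh3}, it suffices to check that the two sides of each equation agree on every basis vector, which is exactly the term-by-term bookkeeping already carried out in the proof of Lemma \ref{calcberg}. I would reuse the explicit action formulas recorded there rather than expand the Bergmann operators from scratch.

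First I would normalize each side so that only one product coefficient survives. For the first equation, $\{\iota\}$ of Lemma \ref{calcberg} gives $B(\alpha[ii],s[ij]) = B([ii], \alpha s\,[ij])$, while $\{I\}$ applied to the chain $[ik]\,[kj]$ gives $B(\alpha[ik],s[kj]) = B([ik], \alpha s\,[kj])$. Writing $t := \alpha s$, the first equation then collapses to the single unit-coefficient identity $B([ii],t[ij]) = B([ik],t[kj])$, which one may read as ``routing $t$ through the index $k$''. I would treat the second equation by the entirely analogous computation, now with the diagonal element sitting in the second slot, using $\{\bullet\}$ and $\{I\}$ to strip coefficients and reduce it to the same kind of unit-coefficient statement; equivalently it is the mirror of the first under interchanging the two slots (that is, passing to the opposite Jordan pair, which swaps $B(x,y)$ with $B(y,x)$).

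It then remains to verify the reduced identity on basis vectors. Reading off the proof of Lemma \ref{calcberg}, both $B([ii],t[ij])$ and $B([ik],t[kj])$ act as the identity except on the three families indexed by the middle row $j$, namely $b[ji]$, $\gamma[jj]$ and $c[jk]$, so the moved sets already coincide. Matching the images is where the work lies: the two sides produce expressions that look different — for instance $\gamma[jj]$ maps to $\gamma[jj] - t\gamma[ij] + t\gamma t^*[ii]$ on the left and to $\gamma[jj] - \gamma t^*[ji] + t\gamma t^*[ii]$ on the right — and the off-diagonal terms are reconciled through the basis relation $d[ij]=d^*[ji]$ together with $\gamma^*=\gamma$, using $\gamma t^*[ji] = (\gamma t^*)^*[ij] = t\gamma[ij]$; the remaining two families match the same way via the anti-homomorphism law $(xy)^*=y^*x^*$. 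I expect the main obstacle to be purely this index bookkeeping — confirming that each of the two ``routings'' moves precisely the same basis vectors and lands on equal images — together with setting up the correct dual reduction for the second equation. The one place the standing hypothesis is genuinely needed is the quadratic (diagonal) component: the terms $t\gamma t^*[ii]$ produced on the two sides agree only after reassociating $t\gamma t^*$, which is legitimate precisely because the symmetric element $\gamma$ lies in the nucleus. This is the role of the nuclearity of $Sym(\biga)$; without it the two routings would disagree.
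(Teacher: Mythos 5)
Your proposal is correct and takes essentially the same route as the paper, whose entire proof is the phrase ``the computation above yields'': the corollary is meant to be read off from the explicit basis-action formulas in the proof of Lemma~\ref{calcberg}, which is exactly the bookkeeping you perform (your preliminary coefficient-stripping via $\{\iota\}$, $\{I\}$, $\{\bullet\}$ is a clean normalization the paper leaves implicit, and your matching of images via $d[ij]=d^*[ji]$ together with nuclearity of the symmetric diagonal entry in $t\gamma t^*$ is the intended computation). One remark: your reading silently corrects a typo in the statement --- as printed, the right-hand side $B(s[ij],\alpha[kj])$ of the second identity lies in the root subgroup for $w_i-w_k$ and moves the basis family $\gamma[kk]$, which the left-hand side $B(s[ik],\alpha[kj])$ fixes, so the intended expression is $B(s[ij],\alpha[jj])$, precisely the diagonal-in-second-slot version that your $\{\bullet\}$-reduction proves and that the well-definedness claim in the subsequent Definition requires.
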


This allows the following "coordinatization" of root subgroups:

\begin{defn}
For any root $\gamma \in C_3$, one may define abelian group isomorphisms such that:
\[
{G_\gamma} \  \text{\large $\cong$}
\begin{dcases}
\biga,  &{\text If} \ \gamma \in \pm\{w_i+w_j\} \\
Sym(\biga), &{\text If} \ \gamma \in \pm\{2w_i\} \\
\biga,  &{\text If} \ \gamma \in \{w_i-w_j\}
\end{dcases}
\]
The isomorphisms in the first two cases are inherited from the matrix structure of $H_3(\biga)$. In the third case, the isomorphism given by
$\beta([ik]^+,s[kj]^-) \mapsto s$ is well-defined, ensured by the preceding lemma and its corollary.

We will also stick to the convention that is

\[ G_{\pm (w_i+w_j)}(a)=\exp_\pm (a[ij])=\exp_\pm (a^*[ji]) \quad  \mbox{for }  i < j.
\]

\end{defn}


\begin{lem}
Denoting by $G_\gamma(s)$  the preimage of $s \in \biga (\text{resp.} \ Sym(\biga))$ under the preceding isomorphisms, the relation $G_\gamma(s)G_\gamma(r)=G_\gamma(s+r)$ holds for for all $\gamma \in C_3$.
\begin{proof}
This is obvious for $ \gamma \in \pm\{\{w_i+w_j\} \bigcup \{2w_k\}\}$. For the remaining roots, this is a direct consequence of Lemma \ref{calcberg}
\end{proof}
\end{lem}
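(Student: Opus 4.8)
The plan is to follow the trichotomy built into the coordinatization and dispatch the two ``exponential'' families of roots immediately, leaving the roots $w_i - w_j$ as the only genuine computation. For $\gamma \in \pm\{\{w_i+w_j\}\cup\{2w_k\}\}$ the subgroup $G_\gamma$ is by definition the image of $\exp_\pm$, and the coordinatization is the linear assignment $a \mapsto a[ij]$ (respectively $\alpha \mapsto \alpha[ii]$). Since the excerpt records that $\exp_\sigma : V^\sigma \to \Aut(\mathcal{TKK}(V))$ is an additive-group homomorphism, I would simply write
\[
G_\gamma(s)G_\gamma(r)=\exp_+(s[ij])\exp_+(r[ij])=\exp_+\bigl((s+r)[ij]\bigr)=G_\gamma(s+r),
\]
and the same for the $2w_i$ coordinate and for the negative roots via $\exp_-$. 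This needs no more than additivity of $s \mapsto s[ij]$.

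The substantive case is $\gamma = w_i-w_j$, where $G_\gamma(s)=\beta([ik]^+,s[kj]^-)$. First I would reduce the identity to the action on $V^+\oplus V^-$ alone: because $V^+\cup V^-$ generates $\fkg=\mathcal{TKK}(V)$, any automorphism of $\fkg$ is determined by its $+$ and $-$ components, so it suffices to check that the product $\beta([ik]^+,s[kj]^-)\beta([ik]^+,r[kj]^-)$ and $\beta([ik]^+,(s+r)[kj]^-)$ agree on $V^+$ and on $V^-$. Reading off the matrix description $\beta(x,y)=(B(x,y),B(y,x)^{-1})$, the $+$ component of the product is the composite $B([ik],s[kj])\,B([ik],r[kj])$.

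The key step is then item \{II\} of Lemma \ref{calcberg}. Applying it with the index substitution $(i,j,k)\mapsto(i,k,j)$ and scalar entries $a=c=1$, $b=s$, $d=r$ yields
\[
B([ik],s[kj])\,B([ik],r[kj]) = B([ik],(s+r)[kj]),
\]
which is exactly the $+$ component of $G_\gamma(s+r)$; the $-$ component is handled symmetrically, invoking the analogue of \{II\} for the negative root spaces (equivalently, the $B(x,y)\leftrightarrow B(y,x)$ symmetry already used in the proof of the Claim). I expect the main obstacle to be purely bookkeeping rather than conceptual: one must pin down the representative $\beta([ik]^+,s[kj]^-)$ with the correct auxiliary index $k$ so that the two Bergmann operators line up with the left-hand side of \{II\} on the nose, and confirm that the well-definedness supplied by the preceding corollary makes this choice of representative immaterial. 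Beyond invoking Lemma \ref{calcberg}, no new calculation should be required.
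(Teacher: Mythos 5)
Your proposal is correct and takes essentially the same route as the paper: the paper likewise treats $\gamma \in \pm\{\{w_i+w_j\}\cup\{2w_k\}\}$ as immediate (your explicit appeal to additivity of $\exp_\pm$ and of $s \mapsto s[ij]$ is exactly what ``obvious'' means there) and settles $\gamma = w_i - w_j$ as a direct consequence of Lemma \ref{calcberg}, which is precisely your application of item \{II\} (together with the $d[ij]=d^*[ji]$ and $B(x,y)^{-1}=B(-x,y)$ bookkeeping for the minus component, and the preceding corollary for independence of the auxiliary index $k$). No gap; your write-up just makes the paper's two-line proof explicit.
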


We now compute $[\exp_+(x),\exp_-(y)]$ for the cases (1) $x=\alpha[ii]^+ , y = b[ij]^-$; (2) $x=a[ij]^+ , y = b[jk]^-$, (3) $x=b[ij]^+ , y = \alpha[ii]^-$.
 According to Theorem \ref{c3ln}, it is clear that $[\exp_+(\alpha[ii]),\exp_-(b[ij])] $ is supposed to be equal to

\begin{fleqn}[13pt]
\begin{align*}
\left(
\begin{array}{ccc}
h^+ & 0 & 0\\
h_0 \circ ad_{y'} & h_0 & 0\\
h^- \circ Q_{y'} & h^- \circ ad_{y'} & h^-\\
\end{array}
\right)= 
\underbrace{
\left(
\begin{array}{ccc}
h^+ & 0 & 0\\
0 & h_0 & 0\\
0 & 0 & h^-\\
\end{array}
\right)} _\text{$\in G_{w_i-w_j}$}
\underbrace{
\left(
\begin{array}{ccc}
1 & 0 & 0\\
ad_{y'} & 1 & 0\\
Q_{y'} & ad_{y'} & 1\\
\end{array}
\right)
} _\text{$\in G_{-2w_j}$} 
\end{align*}
\end{fleqn}
for some $h$'s given by a automorphism of $V$ (see our notations below Theorem \ref{c3ln}) and $y' \in V^-$. Direct computation of the left hand side of the above equation tells us that $h^-$ is $B(b[ij],\alpha[ii])^{-1}$ while $Q_y=Q_{(b^*\alpha b)[ii]}$. This gives the relation:

\begin{fleqn}[0pt]
\begin{alignat*}{8}
&(1) \ [\exp_+(\alpha[ii]),\exp_-(b[ij])]\ &=& \ G_{w_i-w_j}(\alpha b)G_{-2w_j}(b^* \alpha b)
\\
&\text{Similar computations yield:}&&
\\
&(2) \ [\exp_+(\alpha[ij]),\exp_-(b[jk]) ]\ &=& \ G_{w_i-w_k}(\alpha b)
\\
&(3)  \ [\exp_-(\alpha[ii]),\exp_+(b[ij]) ]\ &=& \ G_{w_j-w_i}(- b^* \alpha)G_{2w_j}(b^* \alpha b)
\\
&\text{As are:}&&
\\
&(4) \ [G_{w_i-w_j}(a) , G_{w_j-w_k}(b)]\ &=& \ G_{w_i-w_k}(ab)
\\
&(5) \ [G_{w_i-w_j}(a) , G_{w_j+w_k}(b)]\ &=& \ G_{w_i-w_k}(ab)
\\
&(6) \ [G_{w_i-w_j}(a) , G_{-w_i-w_k}(b)]\ &=& \ G_{- w_j-w_k}(-a^*b)
\end{alignat*}
\end{fleqn}

We can also compute the commutator relation involving both long and short root subgroups. Noting that $G_{w_i-w_j}(r)=\beta(r[ij]^+,[jj]^-),\ G_{w_i+w_j}(s)=\exp_+(s[ij])$, the relation

\begin{fleqn}

\begin{alignat*}{8}
&(7) \ [G_{w_i-w_j}(r) , G_{w_i+w_j}(s)]\ &=& \ G_{2w_i}(-rs^*-sr^*)
\end{alignat*}follows from the fact that
\begin{alignat*}{8}
\quad G_{w_i-w_j}(r)^{-1}G_{w_i+w_j}(s)^{-1}G_{w_i-w_j}(r)G_{w_i+w_j}(s)=&\exp_+(\beta(-r[ij]^+,[jj]^-)(-s[ij]))G_{w_i+w_j}(s)\\
=&\exp_+(-s[ij]+\{r[ij],[jj],-s[ij]\}+0)G_{w_i+w_j}(s)\\
=&\exp_+(-s[ij]-rs^*-sr^*)\exp_+(s[ij])\\
=&\exp_+(-rs^*-sr^*)
\end{alignat*}

\end{fleqn}

\begin{rem}
By its commutator relations, one can see that $\text{PE}(V)$ generalizes elementary hyperbolic unitary groups with full form ring and $\epsilon=-1$ (see \cite{HO}) to the setting of alternative rings.
\end{rem}

We need two more definitions 
before we state the main theorem.

\begin{defn}
We denote by $\St(V)$ the group generated by the the set of symbols
$\{ G_\gamma(a) | \ \gamma \in C_3 ,\ a \in \biga \ \text{if $\gamma$ is short} , a \in Sym(\biga) \ \text{if $\gamma$ is long} \}$
satisfying the commutator relations given by the commutators $[G_{\gamma_1}, G_{\gamma_2}] (\gamma_1 \neq -\gamma_2 )$ in $\text{PE}(V)$.
\end{defn}

\begin{rem}
See \cite{ln} for a general exposition for Steinberg groups of Jordan pairs.
\end{rem}

\begin{defn}(See \cite[8.3]{EJK} , \cite{HO})
We say that $Sym(\biga)$ is {\it finitely generated as a form ring} if $\text{there exists a finite set} \ \{a_i\}_{i=1}^k \subset Sym(\biga)$ such that
$$ Sym(\biga)= \{\sum\limits_{i=1}^{k}s_ia_i s_i^* + (r + r^*) | \forall s_i,r \in \biga \} $$
\end{defn}

\begin{thm}
$\St(V)$ is Kazhdan if $Sym(\biga)$ is finitely generated as a form ring.
\end{thm}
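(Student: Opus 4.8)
The plan is to reuse the template that gave property $(T)$ for $\St_3(\biga)$: set $K=\bigcup_{\gamma\in C_3}G_\gamma$, prove that $K$ is a Kazhdan subset of $\St(V)$, prove that $(\St(V),K)$ has relative property $(T)$, and combine the two. For the Kazhdan-subset half I would check that the commutator relations computed above equip $\St(V)$ with a \emph{strong} $C_3$-grading in the sense of \cite[\S4.4]{EJK}: after fixing simple roots, identities such as $[G_{w_i-w_j}(a),G_{w_j-w_k}(b)]=G_{w_i-w_k}(ab)$, $[G_{w_i-w_j}(a),G_{w_j+w_k}(b)]=G_{w_i+w_k}(ab)$ and $[G_{w_i-w_j}(r),G_{w_i+w_j}(s)]=G_{2w_i}(-rs^*-sr^*)$ place every non-simple positive root subgroup inside the subgroup generated by the simple-root subgroups. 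Since $C_3$ is irreducible of rank $3$, \cite[Theorem 5.1]{EJK} then gives $\kappa(\St(V),K)>0$.

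For the relative half I would handle short and long roots separately. Each short root subgroup $G_\gamma\cong\biga$ lies in a subgroup $T_\gamma\leq\St(V)$ that is a homomorphic image of $(\biga\ast\biga)\ltimes\biga^2$, with $G_\gamma$ a coordinate of the $\biga^2$-part. For $\gamma=w_1-w_2$ one takes $T_\gamma=\langle G_{w_2-w_3},G_{w_3-w_2}\rangle\ltimes\langle G_{w_1-w_2},G_{w_1-w_3}\rangle$, exactly as in the $A_2$ case; for $\gamma=w_1+w_3$ one takes $T_\gamma=\langle G_{w_1-w_2},G_{w_2-w_1}\rangle\ltimes\langle G_{w_1+w_3},G_{w_2+w_3}\rangle$, where $[G_{w_1-w_2}(a),G_{w_2+w_3}(b)]=G_{w_1+w_3}(ab)$ identifies the action with left multiplication on $\biga^2$ (all remaining short roots follow by relabeling indices and signs). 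Since relative property $(T)$ descends to the quotient $T_\gamma$, Theorem \ref{KasShalomZ} applies, and the chain of $\kappa_r$-inequalities used for $\St_3(\biga)$ yields a finite set $S_\gamma$ with $\kappa_r(\St(V),G_\gamma;S_\gamma)>0$.

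The new ingredient, and the step I expect to be the main obstacle, is relative property $(T)$ for the long root subgroups $G_{2w_i}\cong Sym(\biga)$; this is exactly where the form-ring hypothesis enters. Writing $Sym(\biga)=\{\sum_{l=1}^{k}s_la_ls_l^*+(r+r^*)\}$ with $\{a_l\}_{l=1}^{k}$ a fixed finite set, and using that $G_{2w_1}$ is abelian, I would factor a general element as $G_{2w_1}(\sigma)=\big(\prod_{l=1}^{k}G_{2w_1}(s_la_ls_l^*)\big)\,G_{2w_1}(r+r^*)$ and then express each factor as a word of \emph{bounded} length. The trace factor is $G_{2w_1}(r+r^*)=[G_{w_1-w_2}(-r),G_{w_1+w_2}(1)]$, while the relation $[G_{-2w_2}(a_l),G_{w_1+w_2}(s_l)]=G_{w_1-w_2}(-s_la_l)\,G_{2w_1}(s_la_ls_l^*)$ gives $G_{2w_1}(s_la_ls_l^*)=G_{w_1-w_2}(s_la_l)\,[G_{-2w_2}(a_l),G_{w_1+w_2}(s_l)]$ (here one uses that $G_{w_1-w_2}$ and $G_{2w_1}$ commute). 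Crucially, the only long-root symbols that appear are the finitely many \emph{fixed} elements $G_{-2w_2}(a_l)^{\pm1}$ and $G_{w_1+w_2}(1)^{\pm1}$. Hence $G_{2w_1}\subseteq B_1\cdots B_N$ for a fixed $N=N(k)$, where each $B_i$ is either $G_{w_1-w_2}$ or $G_{w_1+w_2}$ (relative $(T)$ by the previous paragraph) or a finite set (relative $(T)$ trivially, taking $S=B_i$). Lemma \ref{bddgen} gives relative property $(T)$ for $(\St(V),B_1\cdots B_N)$, and since relative property $(T)$ is inherited by subsets, also for $(\St(V),G_{2w_1})$.

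To finish, I would apply Lemma \ref{bddgen} once more to the finitely many root subgroups to obtain a single finite $S$ with $\kappa_r(\St(V),K;S)>0$; together with $\kappa(\St(V),K)>0$ this shows $\St(V)$ is Kazhdan. The genuine difficulty is concentrated in the third paragraph: one must verify that finite generation of $Sym(\biga)$ as a form ring lets every element of $G_{2w_i}$ be written as a bounded word in short-root subgroups and finitely many fixed long-root elements, using only the commutator relations already established. Throughout, nuclearity of $*$ is what keeps the norms $s_la_ls_l^*$ and the relations $(1)$--$(7)$ unambiguous in the non-associative setting.
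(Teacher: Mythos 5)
Your proposal is correct and follows essentially the same route as the paper, which invokes \cite[Theorem 5.1]{EJK} via strongness of the $C_3$-grading, handles short root subgroups through homomorphic images of $(\biga\ast\biga)\ltimes\biga^2$ (Theorem \ref{KasShalomZ}), and treats long roots by the same bounded-generation argument from relations (3) and (7) summarized from \cite[Prop.~8.6(a)]{EJK}. Your only cosmetic deviation is packaging the fixed conjugating elements $G_{-2w_2}(a_l)^{\pm1}$ as singleton sets in Lemma \ref{bddgen} rather than as conjugated subgroups $G_{\gamma_{short}}^{G_{\gamma_{long}}(a_i)}$, which is an equivalent bookkeeping of the identical argument.
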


\begin{proof}
By the commutator relations of $\text{PE}(V)$ it  follows immediately that the relation is strong, therefore \cite[Theorem 5.1]{EJK} applies.

It remains to show relative property $(T)$ for each root subgroup. The proof of
 \cite[Prop. $8.6(a)$]{EJK} could be applied to our situation verbatim. We summarize the proof here: using the commutator relations we gave previously, the statement is true for any short root subgroup $G_\gamma$ since it lies in a bigger subgroup that is a homomorphic image of $St(\biga)$. For long roots it is a bounded generation argument: If we set for the commutator relations (3) and (7) $\alpha=a_i, \ b=s_i$ and $s=-1$ as in the definition of $Sym(\mathcal{A})$ , then any element in $G_{2w_i}$ can be written as a finite product (of bounded length) of the elements from short root subgroups and their conjugates by the finite set $\{G_\gamma(a_i) \mid {a_i} \mbox{ generates } Sym(\mathcal{A}), \gamma\in C_3 \mbox{ long}\}$. Since $(G,G_{\gamma_{short}})$ and $(G,G_{\gamma_{short}}^{G_{\gamma_{long}}(a_i)})$ are all relative ($T$) pairs, a direct application of  Lemma \ref{bddgen} finishes the proof.
\end{proof}

Since it is a homomorphic image of $\St(V)$, we have:
\begin{cor}
PE$(V)$ is Kazhdan.
\end{cor}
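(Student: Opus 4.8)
The plan is to realize $\text{PE}(V)$ as a homomorphic image of $\St(V)$ and then invoke the fact---already used above to pass from $\St_3(\biga)$ to $\text{PE}_3(\biga)$---that property $(T)$ descends to quotients. Under the standing hypothesis that $Sym(\biga)$ is finitely generated as a form ring, the preceding theorem gives that $\St(V)$ has property $(T)$, so it remains only to produce a surjection $\pi:\St(V)\twoheadrightarrow\text{PE}(V)$ and to check that Kazhdanness transports across it.

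First I would construct $\pi$. By definition $\St(V)$ is the group on the generators $G_\gamma(a)$ ($\gamma\in C_3$, with $a\in\biga$ for short $\gamma$ and $a\in Sym(\biga)$ for long $\gamma$) subject to \emph{precisely} the commutator relations satisfied by the corresponding root-subgroup elements inside $\text{PE}(V)$. Consequently the assignment sending each generator $G_\gamma(a)$ of $\St(V)$ to the element of $\text{PE}(V)$ it is named after respects every defining relation, hence extends to a group homomorphism $\pi:\St(V)\to\text{PE}(V)$. To see that $\pi$ is onto, recall from Theorem \ref{c3ln} that the root subgroups $\{G_\gamma\}_{\gamma\in C_3}$ of $\text{PE}(V)$ include the subgroups $U^{\pm}=\mathrm{Im}(\exp_{\pm})$ that generate $\text{PE}(V)$ by the very definition of the projective elementary group; since every root subgroup lies in the image of $\pi$, the map is surjective.

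Finally I would invoke the hereditary property explicitly: if $S$ is a finite Kazhdan subset of $\St(V)$ with Kazhdan constant $\eps>0$, then $\pi(S)$ is a finite Kazhdan subset of $\text{PE}(V)$ with the same constant, because any unitary representation of $\text{PE}(V)$ pulls back along $\pi$ to a representation of $\St(V)$ in which a $(\pi(S),\eps)$-invariant vector is automatically $(S,\eps)$-invariant, and a $\St(V)$-fixed vector is $\text{PE}(V)$-fixed by surjectivity of $\pi$. Hence $\text{PE}(V)$ is Kazhdan. I do not expect any genuine obstacle here: all the substantive work has been front-loaded into the preceding theorem, and the only two points needing (routine) verification are that $\pi$ is well defined---immediate from the relations-based presentation of $\St(V)$---and that it is surjective, which is the remark above.
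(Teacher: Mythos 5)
Your proposal is correct and takes essentially the same route as the paper, whose entire proof is the observation that $\mathrm{PE}(V)$ is a homomorphic image of $\St(V)$ together with the previously stated fact that property $(T)$ is inherited by homomorphic images. The details you supply --- well-definedness of $\pi$ from the relations-based presentation of $\St(V)$, surjectivity because the root subgroups generate $U^{\pm}$ and hence $\mathrm{PE}(V)$, and transport of the Kazhdan set along $\pi$ --- are precisely the routine verifications the paper leaves implicit.
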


\section*{Acknowledgements}

The author would like to thank Efim Zelmanov and Mikhail Ershov for numerous helpful comments and suggestions.

%
%
%
%

%

\


\end{document}